\newtheorem{theorem}{Theorem}[section]
\newtheorem{proposition}[theorem]{Proposition}
\newtheorem{corollary}[theorem]{Corollary} 
\theoremstyle{definition}
\newtheorem{definition}[theorem]{Definition}
\theoremstyle{remark}
\newtheorem{remark}[theorem]{Remark}
\newtheorem{example}[theorem]{Example}
\newtheorem{case}{Case}
\newtheorem{case2}{Case}
\numberwithin{subcase}{case}
\newtheorem{subcase2}{Case}
\numberwithin{subcase2}{case2}
\numberwithin{subsupposition}{supposition}
\numberwithin{subsupposition2}{supposition2}
\def\BState{\State\hskip-\ALG@thistlm}
\DeclareMathOperator{\pn}{Pn}
\subjclass[2000]{68R15, 05A05}
\keywords{powers of matrices, powers of words, normal form, $M$\!-equivalence}
\begin{document}
\title{Parikh Matrices for Powers of Words}
\author{Adrian Atanasiu}
\address{Consulting Prof. at Faculty of Mathematics and Computer Science\\
Bucharest University\\
Str. Academiei 14\\
Bucharest 010014, Romania}
\email{aadrian@gmail.com}
\author{Ghajendran Poovanandran}
\address{School of Mathematical Sciences\\
Universiti Sains Malaysia\\
11800 USM, Malaysia}
\email{p.ghajendran@gmail.com}
\author{Wen Chean Teh}
\address{School of Mathematical Sciences\\
Universiti Sains Malaysia\\
11800 USM, Malaysia}
\email[Corresponding author]{dasmenteh@usm.my}

\begin{abstract}
Certain upper triangular matrices, termed as Parikh matrices, are often used in the combinatorial study of words. Given a word, the Parikh matrix of that word elegantly computes the number of occurrences of certain predefined subwords in that word. In this paper, we compute the Parikh matrix of any word raised to an arbitrary power. Furthermore, we propose canonical decompositions of both Parikh matrices and words into normal forms. Finally, given a Parikh matrix, the relation between its normal form and the normal forms of words in the corresponding $M$\!-equivalence class is established.
\end{abstract}

\maketitle
\section{Introduction}

The problem of finding the optimal number of subwords of a word needed to completely determine that word still remains open \cite{jM00}. In the spirit of solving this problem, Mateescu et al. introduced Parikh matrices in \cite{MSSY01} by generalizing the classical Parikh vectors \cite{rP66}. In general, the Parikh matrix of a word is an upper triangular matrix which contains the number of occurrences of certain predefined subwords of that word. Despite storing more information about a word, not every Parikh matrix uniquely determines a word. Nevertheless, Parikh matrices and their variants \cite{AR13,tS04,aC10,CW08,oE04} have opened up the door to various new investigations in the combinatorial study of words (for example, see \cite{aA07,AAP08,AMM02,vS09,SS06,wT14,wT16,wT16b,aS10,SY10,AT16,GT16a,GT17,MSY04,GT17b,BM16,MBS17,SHN09}).

Repetition in words has been intensively studied in the literature and it dates back to the works of Thue in the early 1900s. Often in the literature, a word is expressed as the power of another word; for instance the word \textit{murmur} can be written as $(mur)^2$. In this paper, we deal with such powers of words in relative to Parikh matrices. Our main contributions would be as follows:
\begin{enumerate}[leftmargin=2em]
\item A general formula to obtain the Parikh matrix of any power of a given word;
\item A normal form of an arbitrary Parikh matrix (respectively word) obtained by decomposing that matrix (respectively word) in terms of powers of other Parikh matrices (respectively words).
\end{enumerate}

The remainder of this paper is structured as follows. Section 2 provides the basic terminology and preliminaries. Section 3 deals with Parikh matrices of powers of words. Apart from presenting a general formula to obtain such Parikh matrices, the properties of these matrices are studied as well. In the next section, we propose a normal form of Parikh matrices sustained by a canonical \mbox{decomposition}. An algorithm to obtain this normal form is presented for Parikh matrices over the binary alphabet. Section 5 proposes a normal form of words, analogous to the one for Parikh matrices. The relation between the normal form of an arbitrary Parikh matrix and the normal forms of the words represented by that matrix is then established. Our conclusions follow after that.

\section{Preliminaries}
The set of all positive integers is denoted by $\mathbb{N}$. 

Suppose $\Sigma$ is a finite and nonempty alphabet. The set of all words over $\Sigma$ is denoted by $\Sigma^*$ and $\lambda$ is the unique empty word. Let $\Sigma^+$ denote the set $\Sigma^*\backslash\{\lambda\}$. If $v,w\in\Sigma^*$, the
concatenation of $v$ and $w$ is denoted by $vw$. An ordered alphabet is an alphabet
$\Sigma=\{a_1,a_2,\ldots ,a_s\}$ with an ordering on it. For example, if $a_1<a_2<\cdots <a_s$, then we may write $\Sigma=\{a_1<a_2<\cdots <a_s\}$. For convenience, we shall frequently abuse notation and use $\Sigma$ to denote both the ordered alphabet and its underlying alphabet.

A word $v$ is a \emph{scattered subword} (or simply \textit{subword}) of $w\in \Sigma^*$ if and only if there exist $x_1,x_2,\dotsc, x_n$, $y_0, y_1, \dotsc,y_n\in \Sigma^*$ (possibly empty) such that \mbox{$v=x_1x_2\dotsm x_n \text{ and } w=y_0x_1y_1\dotsm y_{n-1}x_ny_n$}. If the letters in $v$ occur contiguously in $w$ (that is $y_1=y_2=\dotsc=y_{n-1}=\lambda$), then $v$ is a \emph{factor} of $w$.
The number of occurrences of a word $v$ as a subword of $w$ is denoted by $\vert w\vert_v$. 
Two occurrences of $v$ are considered different if and only if they differ by at least one position of some letter. 
For example, $\vert abab\vert_{ab}=3$ and $\vert abcabc\vert_{abc}=4$.
By convention, $\vert w\vert_{\lambda}=1$ for all $w\in \Sigma^*$.
The reader is referred to \cite{RS97} for language theoretic notions not detailed here.

For any integer $n\geq 2$, let $\mathcal{M}_n$ denote the multiplicative monoid of $n\times n$ upper triangular matrices with nonnegative integral entries and unit diagonal. For a matrix $X$, we denote its $(i,j)$-entry by $X_{i,j}$.

\begin{definition} 
Suppose $\Sigma=\{a_1<a_2< \dotsb<a_s\}$ is an ordered alphabet, where $s\ge 2$. The \emph{Parikh matrix mapping} with respect to $\Sigma$, denoted by $\Psi_{\Sigma}$, is the morphism
$$\Psi_{\Sigma}: \Sigma^*\rightarrow \mathcal{M}_{s+1}$$
defined as follows: $\Psi_{\Sigma}(\lambda)=I_{s+1}$; if  $\Psi_{\Sigma}(a_q)=M$, then $M_{i,i}=1$ for each $1\leq i\leq s+1$, $M_{q,q+1}=1$ and all other entries of the matrix $\Psi_{\Sigma}(a_q)$ are zero. Matrices of the form  $\Psi_{\Sigma}(w)$ for $w\in \Sigma^*$ are called \emph{Parikh matrices}. We denote by $\mathcal{P}_{\Sigma}$ the set of all Parikh matrices with respect to $\Sigma$ and let $\mathcal{P}_{\Sigma}^+=\mathcal{P}_{\Sigma}\backslash\{I_{s+1}\}$.
\end{definition}

\begin{theorem}\cite{MSSY01}\label{1206a}
Suppose $\Sigma=\{a_1<a_2< \dotsb<a_s\}$ is an ordered alphabet and $w\in \Sigma^*$. The matrix $\Psi_{\Sigma}(w)=M$ has the following properties:
\begin{itemize}
\item $M_{i,i}=1$ for each $1\leq i \leq s+1$;
\item $M_{i,j}=0$ for each $1\leq j<i\leq s+1$;
\item $M_{i,j+1}=\vert w \vert_{a_ia_{i+1}\dotsm a_j}$ for each $1\leq i\leq j \leq s$.
\end{itemize}
\end{theorem}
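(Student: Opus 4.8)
The plan is to treat the three assertions in order, the first two being essentially immediate from the codomain of the mapping and the third requiring an induction on the length of $w$. For the diagonal and below-diagonal claims, I would simply observe that by definition $\Psi_{\Sigma}$ maps $\Sigma^*$ into $\mathcal{M}_{s+1}$, the monoid of $(s+1)\times(s+1)$ upper triangular matrices with nonnegative integral entries and unit diagonal. Hence $M_{i,i}=1$ for every $i$ and $M_{i,j}=0$ whenever $j<i$, both by construction. One should record the (routine) fact that $\mathcal{M}_{s+1}$ is closed under multiplication and that each generator $\Psi_{\Sigma}(a_q)$ lies in it, so that $\Psi_{\Sigma}(w)$ genuinely belongs to $\mathcal{M}_{s+1}$ for every $w$.

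The substance lies in the third bullet, which I would prove by induction on $|w|$. For the base case $w=\lambda$, we have $M=I_{s+1}$, so $M_{i,j+1}=0$ for all $i\le j$; and since $a_ia_{i+1}\dotsm a_j$ is a nonempty word whenever $i\le j$, it cannot occur as a subword of $\lambda$, giving $|\lambda|_{a_i\dotsm a_j}=0$, as required. For the inductive step I would write $w=ua_q$ with $u\in\Sigma^*$ shorter than $w$, and exploit that $\Psi_{\Sigma}$ is a morphism: $M=\Psi_{\Sigma}(u)\,\Psi_{\Sigma}(a_q)$. Setting $N=\Psi_{\Sigma}(u)$ and multiplying by the generator $\Psi_{\Sigma}(a_q)$, which differs from the identity only in its single entry $1$ at position $(q,q+1)$, a direct computation yields the entrywise recursion
\[
M_{i,k}=N_{i,k}+[\,k=q+1\,]\,N_{i,q},
\]
where $[\,\cdot\,]$ denotes the indicator that takes value $1$ when the condition holds and $0$ otherwise.

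The crux is then to match this with the corresponding combinatorial recursion for subword counts. Classifying the occurrences of $v=a_i\dotsm a_j$ in $w=ua_q$ according to whether or not the appended final letter $a_q$ participates, one obtains
\[
|w|_{a_i\dotsm a_j}=|u|_{a_i\dotsm a_j}+[\,j=q\,]\,|u|_{a_i\dotsm a_{q-1}},
\]
since an occurrence can use the terminal $a_q$ only when $a_j=a_q$, in which case the remaining prefix $a_i\dotsm a_{q-1}$ must be embedded in $u$. Applying the inductive hypothesis to $u$, which identifies $N_{i,j+1}$ with $|u|_{a_i\dotsm a_j}$ and $N_{i,q}$ with $|u|_{a_i\dotsm a_{q-1}}$, and comparing the two displayed recursions termwise, gives $M_{i,j+1}=|w|_{a_i\dotsm a_j}$ for every $i\le j$, closing the induction.

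The main obstacle I anticipate is the careful handling of the boundary case in the combinatorial recursion where the prefix $a_i\dotsm a_{q-1}$ degenerates to the empty word, namely when $i=q=j$ so that $v=a_q$ is a single letter. Here one must invoke the convention $|u|_{\lambda}=1$ and verify that it dovetails with the diagonal entry $N_{q,q}=1$ supplied by the first bullet; this is precisely what produces the ``$+1$'' accounting for the newly appended copy of $a_q$. Once this alignment between the matrix conventions and the subword-count conventions is confirmed, the two recursions coincide and the argument is complete.
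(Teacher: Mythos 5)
Your proof is correct. The paper itself gives no proof of this statement --- it is quoted from \cite{MSSY01} --- and your argument (the first two bullets from the definition of $\mathcal{M}_{s+1}$, the third by induction on $|w|$ via the matrix recursion $M_{i,k}=N_{i,k}+[k=q+1]N_{i,q}$ matched against the subword recursion for appending a letter, with the degenerate case $i=q=j$ handled by $|u|_\lambda=1=N_{q,q}$) is essentially the standard proof found in that reference.
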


\begin{remark}
Suppose $\Sigma=\{a_1<a_2< \dotsb<a_s\}$. The Parikh vector $\Psi(w)=(|w|_{a_1},|w|_{a_2},\ldots,|w|_{a_s})$ of a word $w\in\Sigma^*$ is contained in the second diagonal of the Parikh matrix $\Psi_\Sigma(w)$.
\end{remark}

\begin{example}
Suppose $\Sigma=\{a<b<c\}$ and $w=ababcc$.
Then \begin{align*}
\Psi_{\Sigma}(w)&=\Psi_{\Sigma}(a)\Psi_{\Sigma}(b)\Psi_{\Sigma}(a)\Psi_{\Sigma}(b)\Psi_{\Sigma}(c)\Psi_{\Sigma}(c)\\
&= \begin{pmatrix}
1 & 1 & 0 &0 \\
0 & 1 & 0 & 0\\
0 & 0 & 1 & 0\\
0 & 0 & 0 & 1
\end{pmatrix}
\begin{pmatrix}
1 & 0 & 0 &0 \\
0 & 1 & 1 & 0\\
0 & 0 & 1 & 0\\
0 & 0 & 0 & 1
\end{pmatrix}
\dotsm
\begin{pmatrix}
1 & 0 & 0 &0 \\
0 & 1 & 0 & 0\\
0 & 0 & 1 & 1\\
0 & 0 & 0 & 1
\end{pmatrix}\\
&= \begin{pmatrix}
1 & 2 & 3 & 6 \\
0 & 1 & 2 & 4\\
0 & 0 & 1 & 2\\
0 & 0 & 0 & 1
\end{pmatrix}
=\begin{pmatrix}
1 & \vert w\vert_a & \vert w\vert_{ab} & \vert w\vert_{abc} \\
0 &1 & \vert w\vert_b & \vert w\vert_{bc}\\
0 & 0 & 1 & \vert w\vert_c\\
0 & 0 & 0 & 1
\end{pmatrix}.
\end{align*}
\end{example}

The following is a basic property used to decide whether a matrix in $\mathcal{M}_3$ is a Parikh matrix.
\begin{theorem}(see \cite{MSY04})\label{EntryParikh}
Suppose $M\in\mathcal{M}_3$. The matrix $M$ is a Parikh matrix if and only if $M_{1,3}\le M_{1,2}\cdot M_{2,3}$.
\end{theorem}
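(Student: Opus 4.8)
The plan is to exploit Theorem~\ref{1206a} to translate the matrix condition into a statement purely about subword counts over a binary alphabet. Writing a generic $M\in\mathcal{M}_3$ as
$$M=\begin{pmatrix}1 & x & z\\ 0 & 1 & y\\ 0 & 0 & 1\end{pmatrix},$$
with $x,y,z\ge 0$ integers, Theorem~\ref{1206a} tells us that $M=\Psi_\Sigma(w)$ for $\Sigma=\{a_1<a_2\}$ if and only if $|w|_{a_1}=x$, $|w|_{a_2}=y$ and $|w|_{a_1a_2}=z$. Thus the theorem reduces to the assertion that such a word $w$ exists if and only if $z\le xy$, and I would establish the two implications separately.

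For necessity, suppose $M=\Psi_\Sigma(w)$. Every occurrence of $a_1a_2$ as a subword of $w$ is determined by choosing one occurrence of the letter $a_1$ together with one later occurrence of the letter $a_2$. Since $w$ contains $x$ letters $a_1$ and $y$ letters $a_2$, the number of such position-respecting pairs is at most the total number $xy$ of ways to pick one $a_1$ and one $a_2$; hence $z=|w|_{a_1a_2}\le xy$.

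For sufficiency, the key observation is a counting identity: if one scans $w$ and records, for each occurrence of $a_1$, the number $t_i$ of letters $a_2$ lying to its right, then $|w|_{a_1a_2}=\sum_{i=1}^{x}t_i$, where each $t_i\in\{0,1,\ldots,y\}$. Consequently the attainable values of $z$ are exactly the integers in $[0,xy]$. Given $z$ with $0\le z\le xy$, I would use division to write $z=qy+r$ with $0\le r<y$, which forces $0\le q\le x$ and, when $r>0$, even $q<x$. I would then exhibit the explicit word
$$w=a_1^{\,q}\,a_2^{\,y-r}\,a_1\,a_2^{\,r}\,a_1^{\,x-q-1},$$
whose three relevant subword counts are $x$, $y$, and $qy+r=z$ by direct inspection. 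By Theorem~\ref{1206a} this $w$ realizes $M$, completing the argument.

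I expect the main obstacle to be nothing conceptually deep but rather the careful bookkeeping in the sufficiency construction, especially the degenerate cases. When $y=0$ the constraint forces $z=0$ and the word $a_1^{x}$ suffices; when $x=0$ it likewise forces $z=0$ with $w=a_2^{y}$; and when $r=0$ the displayed word would have a negative exponent in the case $q=x$, so one should instead use $w=a_1^{\,q}a_2^{\,y}a_1^{\,x-q}$, which yields the count $qy=z$ for every $0\le q\le x$. Verifying that the displayed counts are correct in each regime is the only place requiring attention, and it is entirely elementary.
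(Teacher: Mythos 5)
Your argument is correct: the necessity direction via the pairing bound $|w|_{a_1a_2}\le |w|_{a_1}\cdot|w|_{a_2}$ and the sufficiency direction via the explicit word $a_1^{\,q}a_2^{\,y-r}a_1a_2^{\,r}a_1^{\,x-q-1}$ (with the degenerate cases you list) together establish the characterization, and your reduction through Theorem~\ref{1206a} is legitimate. Note that the paper does not prove this statement at all --- it is quoted from \cite{MSY04} --- so there is no internal proof to compare against; your write-up is the standard elementary argument one finds in that reference.
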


\begin{definition}
Suppose $\Sigma$ is an ordered alphabet.
Two words $w,w'\in \Sigma^*$ are \emph{$M$\!-equivalent}, denoted by $w\equiv_Mw'$, iff $\Psi_{\Sigma}(w)=\Psi_{\Sigma}(w')$.
A word $w\in \Sigma^*$ is \emph{$M$\!-ambiguous} iff it is $M$\!-equivalent to another distinct word. Otherwise, $w$ is \emph{$M$\!-unambiguous}. We denote the $M$\!-equivalence class of a word $w\in\Sigma^*$ by $C_w$.
\end{definition}

\section{Powers of Parikh Matrices}

The following result can be used to compute any power of a given matrix in $\mathcal{M}_n$ where integer $n\ge 2$. In particular, since every Parikh matrix is a matrix in $\mathcal{M}_n$ for some integer $n\ge 2$, this result can be applied to it as well.

\begin{theorem}\label{PowerTheorem}
For every integer $m\geq 1$, $n\geq 2$, and $X \in \mathcal{M}_n$,
\begin{equation*}
\displaystyle
(X^m)_{i,j}= \begin{cases}
\sum\limits_{t=1}^{j-i} \binom{m}{t} \underbrace{\sum\limits_{i<k_1<k_2<\dotsb <k_{t-1}<j} X_{i,k_1}X_{k_1,k_2}\dotsm X_{k_{t-1},j} }_{\text{understood to be } X_{i,j} \text{ when } t=1} &\text{ if } 1\leq i<j\leq n,\\
1 &\text{ if } 1\leq i=j \leq n,\\
0 & \text{ if } 1\leq j<i\leq n.
\end{cases}
\end{equation*}
\end{theorem}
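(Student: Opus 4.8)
The plan is to prove the formula for $(X^m)_{i,j}$ by induction on $m$, using the combinatorial structure of matrix multiplication in $\mathcal{M}_n$. The diagonal and below-diagonal cases are immediate: since each $X \in \mathcal{M}_n$ is upper triangular with unit diagonal, so is every power $X^m$, giving $(X^m)_{i,i}=1$ and $(X^m)_{i,j}=0$ for $j<i$. The content lies in the case $1 \le i < j \le n$, and I would reformulate the inner sum combinatorially: the expression
\[
\sum_{i<k_1<\dotsb<k_{t-1}<j} X_{i,k_1}X_{k_1,k_2}\dotsm X_{k_{t-1},j}
\]
is the sum over all strictly increasing \emph{chains} from $i$ to $j$ using exactly $t$ ``steps'' (i.e.\ $t+1$ indices $i=k_0<k_1<\dotsb<k_{t-1}<k_t=j$), weighting each chain by the product of the corresponding entries of $X$. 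Writing $S_t(i,j)$ for this chain-sum, the claim becomes $(X^m)_{i,j}=\sum_{t=1}^{j-i}\binom{m}{t}S_t(i,j)$.

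For the induction, I would take $m=1$ as the base case, where only $t=1$ survives and $\binom{1}{1}S_1(i,j)=X_{i,j}$, as required. For the inductive step, assuming the formula holds for $m$, I would compute
\[
(X^{m+1})_{i,j}=\sum_{k=i}^{j}(X^m)_{i,k}X_{k,j}=(X^m)_{i,j}+\sum_{k=i}^{j-1}(X^m)_{i,k}X_{k,j},
\]
splitting off the $k=j$ term using $X_{j,j}=1$. Substituting the inductive hypothesis into each $(X^m)_{i,k}$ and collecting the coefficient of each chain-sum $S_t(i,j)$, I expect the cross terms to reorganize so that the coefficient of $S_t(i,j)$ becomes $\binom{m}{t}+\binom{m}{t-1}$ (the first from the $k=j$ term carrying the full chain, the second from extending a length-$(t-1)$ chain to $k<j$ by one final step $X_{k,j}$, which merges the chain-sums $\sum_k S_{t-1}(i,k)X_{k,j}=S_t(i,j)$). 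Pascal's identity $\binom{m}{t}+\binom{m}{t-1}=\binom{m+1}{t}$ then yields the formula for $m+1$.

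The main obstacle, and the step requiring the most care, is the bookkeeping in this recombination: verifying that $\sum_{k=i}^{j-1} S_{t-1}(i,k)\,X_{k,j}=S_t(i,j)$ for each $t\ge 2$, and checking the boundary indices so that the range of $t$ (namely $1\le t\le j-i$) is handled correctly at both endpoints. Concretely, the top term $t=j-i$ arises only from the unique maximal chain $i,i{+}1,\dotsc,j$, and one must confirm that no term of degree exceeding $j-i$ appears (since a strictly increasing chain from $i$ to $j$ can have at most $j-i$ steps). I would present the chain-sum identity as a short lemma or an inline observation, then let Pascal's identity close the induction cleanly. An alternative, essentially equivalent, route is a direct combinatorial argument: expand $X^m=(I+N)^m$ where $N=X-I$ is strictly upper triangular, apply the binomial theorem (valid since $I$ commutes with $N$) to get $\sum_{t}\binom{m}{t}N^t$, and observe that $(N^t)_{i,j}$ is exactly the chain-sum $S_t(i,j)$ because $N_{k,k}=0$ forces every index in a contributing product to strictly increase; this avoids the induction entirely and may be the cleaner presentation.
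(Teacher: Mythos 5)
Your main argument---induction on $m$, splitting off the $k=j$ term, merging $\sum_{k}S_{t-1}(i,k)X_{k,j}=S_t(i,j)$, and closing with Pascal's identity $\binom{m}{t-1}+\binom{m}{t}=\binom{m+1}{t}$---is exactly the proof the paper gives (the paper additionally splits off the $l=i$ term, which contributes the $X_{i,j}$ that is later absorbed as the $t=1$ term of the reindexed sum; this is only a cosmetic difference in bookkeeping). Your alternative route via $X=I+N$ with $N$ strictly upper triangular is genuinely different from the paper and is, if anything, cleaner: the binomial theorem applies since $I$ commutes with $N$, the identity $(N^t)_{i,j}=\sum_{i<k_1<\dotsb<k_{t-1}<j}X_{i,k_1}\dotsm X_{k_{t-1},j}$ is immediate because strict upper-triangularity forces the indices in any nonvanishing product to increase strictly, and the truncation of the sum at $t=j-i$ falls out of $N^t$ having its $(i,j)$ entry vanish once $t>j-i$ (while $\binom{m}{t}=0$ handles $t>m$). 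That version eliminates the induction and the index-shuffling entirely, at the cost of a one-line observation that $\mathcal{M}_n=I+(\text{nilpotent})$; either presentation is correct and complete.
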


\begin{proof}
We prove by induction on the power $m$. The base step is obvious. For the induction step, 
we only consider the case $i<j$ as the other two cases trivially hold.
We have
\begin{align}
(X^{m+1})_{i,j}={}&\sum_{l=1}^{n} (X^m)_{i,l} X_{l,j} \nonumber\\
={}& \sum_{l=i}^{j} (X^m)_{i,l} X_{l,j} \nonumber\\
={}&X_{i,j}  + \sum_{l=i+1}^{j-1} (X^m)_{i,l} X_{l,j}+ (X^{m})_{i,j }. \tag{$\ast$}
\end{align}
Now,
\begin{align*}
\sum_{l=i+1}^{j-1} (X^m)_{i,l} X_{l,j}
={}&\sum_{l=i+1}^{j-1} \left[\sum\limits_{t=1}^{l-i} \binom{m}{t} \sum_{i<k_1<k_2<\dotsb <k_{t-1}<l} X_{i ,k_1}X_{k_1,k_2}\dotsm X_{k_{t-1},l}\right] X_{l,j}\\
={}&\sum_{l=i+1}^{j-1} \sum\limits_{t=1}^{l-i} \left[\binom{m}{t} \sum_{i<k_1<k_2<\dotsb <k_{t-1}<l} X_{i ,k_1}X_{k_1,k_2}\dotsm X_{k_{t-1},l} X_{l,j}\right]\\
={}&\sum_{t=1}^{j-i-1} \sum_{l=i+t}^{j-1} \left[\binom{m}{t} \sum_{i<k_1<k_2<\dotsb <k_{t-1}<l} X_{i ,k_1}X_{k_1,k_2}\dotsm X_{k_{t-1},l} X_{l,j}\right]\\
={}&\sum_{t=1}^{j-i-1} \binom{m}{t}  \sum_{l=i+t}^{j-1} \; \sum_{i<k_1<k_2<\dotsb <k_{t-1}<l} X_{i ,k_1}X_{k_1,k_2}\dotsm X_{k_{t-1},l} X_{l,j}\\
={}&\sum_{t=1}^{j-i-1} \binom{m}{t}  \sum_{i<k_1<k_2<\dotsb <k_{t-1}<l<j} X_{i,k_1}X_{k_1,k_2}\dotsm X_{k_{t-1},l} X_{l,j}\\
={}&\sum_{t=1}^{j-i-1} \binom{m}{t}  \sum_{i<k_1<k_2<\dotsb <k_{t}<j} X_{i,k_1}X_{k_1,k_2}\dotsm X_{k_{t},j} \\
={}&\sum_{t=2}^{j-i} \binom{m}{t-1}  \sum_{i<k_1<k_2<\dotsb <k_{t-1}<j} X_{i,k_1}X_{k_1,k_2}\dotsm X_{k_{t-1},j}\\
={}&-X_{i,j}+ \sum_{t=1}^{j-i} \binom{m}{t-1}  \sum_{i<k_1<k_2<\dotsb <k_{t-1}<j} X_{i,k_1}X_{k_1,k_2}\dotsm X_{k_{t-1},j}. \tag{$\ast\ast$}
\end{align*}
(The third equality is obtained by interchanging the order of summation.) Since $\binom{m+1}{t}=\binom{m}{t-1}+\binom{m}{t}$, the induction step is complete because by combining $(\ast)$ and $(\ast\ast)$, we have
$$(X^{m+1})_{i,j}=\sum_{t=1}^{j-i} \left[\binom{m}{t-1}+\binom{m}{t} \right]   \sum_{i<k_1<k_2<\dotsb <k_{t-1}<j} X_{i ,k_1}X_{k_1,k_2}\dotsm X_{k_{t-1},j}.$$
\end{proof}

Since every Parikh matrix is in $\mathcal{M}_n$ for some integer $n\ge 3$, the Parikh matrix of a word to the power of $m$ (where $m$ is a positive integer) can be computed by Theorem~\ref{PowerTheorem}. The following example illustrates this.

\begin{example}
Consider the word $abb$ in $\{a<b\}^*$. Suppose $m$ is a positive integer. Then the Parikh matrix of the word $(abb)^m$ can be computed as follows:
\begin{align*}
\Psi_{\Sigma}((abb)^m)=(\Psi_{\Sigma}(abb))^m &=\begin{pmatrix}
1 & 1 & 2\\ 
0 & 1 & 2\\ 
0 & 0 & 1
\end{pmatrix}^m\\
&=\begin{pmatrix}
1 & m\cdot 1 & m\cdot 2+\binom{m}{2}\cdot 1\cdot 2\\ 
0 & 1 & m\cdot 2\\ 
0 & 0 & 1
\end{pmatrix}\\
&=\begin{pmatrix}
1 & m & m^2+m\\ 
0 & 1 & 2m\\ 
0 & 0 & 1
\end{pmatrix}.
\end{align*}
\end{example}

\begin{definition}
Suppose $m$ and $n$ are positive integers such that $n\ge 2$. We define the function \mbox{$f_m:\mathcal{M}_n\rightarrow\mathcal{M}_n$} by $f_m(X)=X^m$ for all $X\in\mathcal{M}_n$.
\end{definition}

If $X$ is a Parikh matrix, then clearly $f_m(X)$ is a Parikh matrix as well. However, the converse is not necessarily true. In fact, the following is a consequence of Theorem~\ref{EntryParikh} and Theorem~\ref{PowerTheorem} for the binary alphabet which can be used to determine whether $f_m(X)$ is a Parikh matrix. 

\begin{proposition}\label{CharBinParMat}
Suppose $X\in\mathcal{M}_3$ and $m$ is a positive integer. Let 
$X=\begin{pmatrix}
1 & a & c\\ 
0 & 1 & b\\ 
0 & 0 & 1
\end{pmatrix}$. The matrix $X^m$ is a Parikh matrix if and only if either of the following holds: 
\begin{enumerate}
\item if either $a$ or $b$ is zero, then $c=0$;
\item otherwise if both $a$ and $b$ are nonzero, then $\dfrac{c}{ab}\le \dfrac{m+1}{2}$.
\end{enumerate}

\end{proposition}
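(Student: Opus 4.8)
The plan is to reduce the statement to a single numerical inequality by combining the two results already established. First I would invoke Theorem~\ref{PowerTheorem} with $n=3$ to write down the relevant entries of $X^m$ explicitly. Because for $i<j$ the inner sum over intermediate indices $k_1<\dotsb<k_{t-1}$ is very short when $n=3$, the entries come out as $(X^m)_{1,2}=ma$, $(X^m)_{2,3}=mb$, and $(X^m)_{1,3}=mc+\binom{m}{2}ab$, where the last one picks up the single intermediate term $X_{1,2}X_{2,3}=ab$ from the $t=2$ summand.

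Next I would apply Theorem~\ref{EntryParikh}, which characterizes Parikh matrices in $\mathcal{M}_3$ by the condition that the $(1,3)$-entry is at most the product of the $(1,2)$- and $(2,3)$-entries. Substituting the entries computed above, $X^m$ is a Parikh matrix if and only if
$$ mc+\binom{m}{2}ab\le (ma)(mb)=m^2ab. $$
This is the heart of the argument: the proposition now amounts entirely to deciding when this inequality holds, for a positive integer $m$ and nonnegative integers $a,b,c$.

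The remaining step is a clean case analysis on whether $ab=0$. If $a=0$ or $b=0$, then $ab=0$ and the inequality collapses to $mc\le 0$; since $m\ge 1$ and $c\ge 0$, this forces $c=0$, which is exactly condition~(1). If instead both $a$ and $b$ are nonzero, then $ab>0$, and I would rearrange the inequality as $mc\le ab\bigl(m^2-\binom{m}{2}\bigr)=ab\cdot\frac{m(m+1)}{2}$; dividing through by $m>0$ yields precisely $\frac{c}{ab}\le\frac{m+1}{2}$, which is condition~(2).

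I do not anticipate any genuine obstacle here: once the two cited theorems are assembled, the rest is elementary algebra over the nonnegative integers. The only point demanding minor care is the case split itself, since one may divide by $ab$ only after confirming it is nonzero; isolating the degenerate case $ab=0$ is exactly what produces the dichotomy recorded in the statement.
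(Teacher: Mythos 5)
Your proposal is correct and follows essentially the same route as the paper: compute the entries of $X^m$ via Theorem~\ref{PowerTheorem}, apply the criterion of Theorem~\ref{EntryParikh} to get $mc+\binom{m}{2}ab\le m^2ab$, and reduce. The only difference is that you spell out the degenerate case $ab=0$ (where the inequality forces $c=0$), which the paper dismisses as trivial; your version is slightly more complete but not a different argument.
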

\begin{proof}
The biconditional holds trivially for (1), thus it remains to show (2). By Theorem~\ref{PowerTheorem}, we have $X^m=\begin{pmatrix}
1 & ma & mc+\dfrac{m(m-1)}{2}ab\\ 
0 & 1 & mb\\ 
0 & 0 & 1
\end{pmatrix}$. By Theorem~\ref{EntryParikh}, the matrix $X^m$ is a Parikh matrix if and only if $$mc+\dfrac{m(m-1)}{2}ab\le ma\cdot mb.$$ The above inequality can be reduced to $\dfrac{c}{ab}\le \dfrac{m+1}{2}$, thus the conclusion holds.
\end{proof}

The following is immediate by Proposition~\ref{CharBinParMat}.

\begin{corollary}
Suppose $\Sigma$ is an ordered alphabet with $|\Sigma|=2$. For every matrix $X\in\mathcal{M}_3$ with nonzero entries above the main diagonal, there exists a positive integer $M$ such that $X^m\in\mathcal{P}_\Sigma^+$ for all integers $m\ge M$.
\end{corollary}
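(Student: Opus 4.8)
The plan is to apply Proposition~\ref{CharBinParMat} directly and observe that its inequality is eventually satisfied once $m$ grows large enough. First I would write $X=\begin{pmatrix}1 & a & c\\ 0 & 1 & b\\ 0 & 0 & 1\end{pmatrix}$ and note that the hypothesis of nonzero entries above the main diagonal means $a,b,c$ are all positive integers, since entries of matrices in $\mathcal{M}_3$ are nonnegative integers. In particular $a$ and $b$ are both nonzero, so case (2) of Proposition~\ref{CharBinParMat} applies: the matrix $X^m$ is a Parikh matrix if and only if $\dfrac{c}{ab}\le\dfrac{m+1}{2}$.

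The key point is that $\dfrac{c}{ab}$ is a fixed rational number depending only on $X$, whereas the right-hand side $\dfrac{m+1}{2}$ increases without bound as $m$ grows. Hence the inequality $\dfrac{c}{ab}\le\dfrac{m+1}{2}$ is equivalent to $m\ge\dfrac{2c}{ab}-1$, and so I would set $M=\max\!\left\{1,\left\lceil\dfrac{2c}{ab}-1\right\rceil\right\}$ to guarantee simultaneously that $M$ is a positive integer and that $X^m$ is a Parikh matrix for every integer $m\ge M$.

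It then remains to verify that $X^m$ lies in $\mathcal{P}_\Sigma^+$ and not merely in $\mathcal{P}_\Sigma$, that is, that $X^m\ne I_3$. By Theorem~\ref{PowerTheorem} (equivalently, from the explicit form of $X^m$ computed in the proof of Proposition~\ref{CharBinParMat}), we have $(X^m)_{1,2}=ma\ge 1>0$ because $a\ge 1$ and $m\ge 1$; thus $X^m\ne I_3$ and so $X^m\in\mathcal{P}_\Sigma^+$. I do not expect any genuine obstacle here, as the result is an immediate quantitative consequence of the preceding proposition; the only point requiring minor care is choosing $M$ as a positive integer (hence the $\max$ with $1$) so that the threshold is stated validly.
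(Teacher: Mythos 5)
Your proof is correct and follows exactly the route the paper intends: the paper gives no explicit argument, stating only that the corollary is ``immediate by Proposition~\ref{CharBinParMat}'', and your write-up supplies precisely the missing details (case (2) applies since $a,b>0$, the threshold $M=\max\{1,\lceil 2c/(ab)-1\rceil\}$ works because $(m+1)/2$ is unbounded, and $(X^m)_{1,2}=ma>0$ rules out $X^m=I_3$). No issues.
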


\begin{example}
Let $\Sigma=\{a<b\}$. Consider the matrix $X=\begin{pmatrix}
1 & 2 & 9\\ 
0 & 1 & 2\\ 
0 & 0 & 1
\end{pmatrix}$. Then $X^4=\begin{pmatrix}
1 & 8 & 60\\ 
0 & 1 & 8\\ 
0 & 0 & 1
\end{pmatrix}\in\mathcal{P}_\Sigma$. It can easily be checked by using Theorem~\ref{EntryParikh} or Proposition~\ref{CharBinParMat} that $X^{m}\not\in\mathcal{P}_\Sigma$ for all integers $1\le m<4$. An example of word $w\in\Sigma^*$ with $\Psi_\Sigma(w)=X^4$ is $w=a^7b^4ab^4$.
\end{example}

The following result shows that for every positive integer $m$, the function $f_m$ is injective.

\begin{theorem}\label{InjectPowFunc}
Suppose $m$ is a positive integer and $X,Y \in \mathcal{M}_n$ for some integer $n\ge 2$. If $X^m=Y^m$, then $X=Y$.
\end{theorem}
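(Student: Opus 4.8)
The plan is to prove the result by induction on the \emph{gap} $d=j-i$ of an entry, showing that the entry $X_{i,j}$ can be recovered from $X^m$ together with those entries of $X$ that lie closer to the main diagonal. The engine of the argument is Theorem~\ref{PowerTheorem}, which expresses $(X^m)_{i,j}$ as a polynomial in the entries of $X$ whose \emph{linear} part isolates $X_{i,j}$.

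First I would record the crucial structural observation about the formula in Theorem~\ref{PowerTheorem}. For $i<j$, the $t=1$ summand is exactly $\binom{m}{1}X_{i,j}=m\,X_{i,j}$, while every summand with $t\ge 2$ is a sum of products $X_{i,k_1}X_{k_1,k_2}\dotsm X_{k_{t-1},j}$ with $i<k_1<\dotsb<k_{t-1}<j$. In such a product the $t$ consecutive gaps sum to $j-i=d$, and since $t\ge 2$, each individual factor has gap at most $d-1$. Consequently one may write
$$(X^m)_{i,j}=m\,X_{i,j}+R_X(i,j),$$
where the remainder $R_X(i,j)$ depends only on entries $X_{p,q}$ with $q-p<d$.

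The induction then runs as follows. For the base cases, the diagonal and subdiagonal entries of any matrix in $\mathcal{M}_n$ satisfy $X_{i,i}=1=Y_{i,i}$ automatically, while the gap-$1$ entries satisfy $m\,X_{i,i+1}=(X^m)_{i,i+1}=(Y^m)_{i,i+1}=m\,Y_{i,i+1}$, whence $X_{i,i+1}=Y_{i,i+1}$. For the inductive step, assume $X_{p,q}=Y_{p,q}$ for all $p,q$ with $q-p<d$. Since $R_X(i,j)$ involves only entries of gap strictly less than $d$, the inductive hypothesis gives $R_X(i,j)=R_Y(i,j)$ for every entry of gap $d$. Combining this with $X^m=Y^m$ yields $m\,X_{i,j}+R_X(i,j)=m\,Y_{i,j}+R_Y(i,j)$; cancelling the common remainder and dividing by the positive integer $m$ gives $X_{i,j}=Y_{i,j}$. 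This closes the induction and forces $X=Y$.

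I do not anticipate a genuine obstacle here. The only point that requires care is the verification that every summand with $t\ge 2$ factors through entries of strictly smaller gap, so that $R_X(i,j)$ is indeed fully determined by the previously handled entries; this is the combinatorial heart of the argument but follows at once from the gap-counting above. Notably, nonnegativity and integrality of the off-diagonal entries play no role: all that is used is that the scalar $m$ is nonzero, so the linear coefficient can be cancelled.
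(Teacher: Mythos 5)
Your proposal is correct and follows essentially the same route as the paper: a strong induction on the gap $j-i$, using Theorem~\ref{PowerTheorem} to isolate the linear term $m\,X_{i,j}$ (the $t=1$ summand) and observing that all $t\ge 2$ summands involve only entries of strictly smaller gap, which the induction hypothesis has already matched between $X$ and $Y$. Your explicit remark that only $m\neq 0$ is needed (not nonnegativity or integrality of the entries) is a small but accurate sharpening of what the paper's argument actually uses.
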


\begin{proof}
Suppose $X^m=Y^m$.
We prove by strong induction that the second diagonal, the third diagonal and so forth of $X$ and $Y$ are equal, thus $X=Y$. For the base step (corresponding to the second diagonal), we need to show that $X_{i,j}=Y_{i,j}$ 
whenever $j-i=1$. (It is understood that $1\leq i$ and $j\leq n$ must hold.)
Fix $1\leq i\leq n-1$. By Theorem~\ref{PowerTheorem}, $(X^m)_{i,i+1}= m X_{i,i+1} $ and $(Y^m)_{i,i+1}= m Y_{i,i+1} $. Since $X^m=Y^m$, it follows that $X_{i,i+1}=Y_{i,i+1}$.

For the induction step, we need to show that $X_{i,j}=Y_{i,j}$ holds whenever \mbox{$j-i=N+1$}, assuming that $(X)_{i,j'}=(Y)_{i,j'}$ whenever $1\leq j'-i\leq N$. Fix $1\leq i\leq n-N-1$ and let $j=i+N+1$.
By Theorem~\ref{PowerTheorem},
$$(X^m)_{i,j}= \sum\limits_{t=1}^{j-i} \binom{m}{t} \sum_{i<k_1<k_2<\dotsb <k_{t-1}<j} X_{i,k_1}X_{k_1,k_2}\dotsm X_{k_{t-1},j} .$$
Therefore,
\begin{align*}
X_{i,j}={}& (X^m)_{i,j}-\sum\limits_{t=2}^{j-i} \binom{m}{t} \sum_{i<k_1<k_2<\dotsb <k_{t-1}<j} X_{i ,k_1}X_{k_1,k_2}\dotsm X_{k_{t-1},j}\\
={}& (Y^m)_{i,j}-\sum\limits_{t=2}^{j-i} \binom{m}{t} \sum_{i<k_1<k_2<\dotsb <k_{t-1}<j} Y_{i,k_1}Y_{k_1,k_2}\dotsm Y_{k_{t-1},j}=Y_{i,j},
\end{align*}
thus the proof is complete. (Note that the last equality holds by our induction hypothesis and the assumption that $X^m=Y^m$.) 
\end{proof}

\begin{corollary}
Suppose $\Sigma$ is an ordered alphabet with $|\Sigma|\ge 2$ and $v,w\in\Sigma^*$. Then either of the following holds:
\begin{enumerate}
\item $v^m\equiv_M w^m$ for all positive integers $m$; 
\item $v^m\not\equiv_M w^m$ for all positive integers $m$.
\end{enumerate}
\end{corollary}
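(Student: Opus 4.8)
The plan is to translate the statement about words into a statement about their Parikh matrices and then reduce everything to the injectivity result already established in Theorem~\ref{InjectPowFunc}. Since $\Psi_\Sigma$ is a morphism, I would first record the basic identity $\Psi_\Sigma(v^m)=\Psi_\Sigma(v)^m$ for every positive integer $m$, and likewise for $w$. Writing $X=\Psi_\Sigma(v)$ and $Y=\Psi_\Sigma(w)$, note that $X,Y\in\mathcal{M}_{|\Sigma|+1}$ with $|\Sigma|+1\ge 3\ge 2$, so the hypotheses of Theorem~\ref{InjectPowFunc} are available. Under this dictionary, $v^m\equiv_M w^m$ is precisely the matrix equation $X^m=Y^m$.

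With this reformulation, the two cases of the corollary become: either $X^m=Y^m$ for all $m$, or $X^m\neq Y^m$ for all $m$. I would observe first that these two alternatives are mutually exclusive, since there is at least one positive integer, so it suffices to prove that at least one of them holds. The cleanest way to do this is to assume case (2) fails and deduce case (1).

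So, suppose case (2) fails. Then there exists a single positive integer $m_0$ with $X^{m_0}=Y^{m_0}$. Here is where Theorem~\ref{InjectPowFunc} does all the work: applying the injectivity of $f_{m_0}$ to the equation $X^{m_0}=Y^{m_0}$ yields $X=Y$ outright. Consequently $X^m=Y^m$ for every positive integer $m$, which is exactly case (1). Translating back, $v\equiv_M w$, and hence $v^m\equiv_M w^m$ for all $m$.

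I do not expect any genuine obstacle here; the only conceptual step is recognizing that a single coincidence $v^{m_0}\equiv_M w^{m_0}$ already forces $v\equiv_M w$, which is the content of the injectivity theorem. As a remark, this argument actually identifies which alternative holds: case (1) occurs exactly when $v\equiv_M w$, and case (2) occurs exactly when $v\not\equiv_M w$, so the dichotomy is governed entirely by the $M$\!-equivalence of $v$ and $w$ themselves.
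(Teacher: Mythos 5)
Your proposal is correct and follows essentially the same route as the paper: both reduce the dichotomy to showing that a single coincidence $v^{m_0}\equiv_M w^{m_0}$ forces $\Psi_\Sigma(v)=\Psi_\Sigma(w)$ via the morphism property and Theorem~\ref{InjectPowFunc}. No issues to report.
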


\begin{proof}
If $v\equiv_M w$, it follows trivially that
$v^m\equiv_M w^m$ for all integers $m$. Hence, it suffices to prove that
if there exists an integer $m\ge 2$ such that $v^m\equiv_M w^m$, then $v\equiv_M w$.
Suppose $v^m\equiv_M w^m$ for some integer $m\geq 2$. 
Then $(\Psi_{\Sigma}(v))^m=\Psi_{\Sigma}(v^m)=\Psi_{\Sigma}(w^m)=(\Psi_{\Sigma}(w))^m$.
Since $\Psi_{\Sigma}(v),\Psi_{\Sigma}(w)\in \mathcal{M}_{|\Sigma|+1}$, by Theorem~\ref{InjectPowFunc},
we have $\Psi_{\Sigma}(v)=\Psi_{\Sigma}(w)$ and thus $v\equiv_M w$.
\end{proof}

We end this section by the following observation on the $M$\!-equivalence class of an arbitrary power of any word.

\begin{proposition}\label{EqualityMequivClass}
Suppose $\Sigma$ is an ordered alphabet with $|\Sigma|\ge 2$ and $w\in\Sigma^*$. For every positive integer $m$, we have $|C_{w^m}|\ge |C_w|^m$.
\end{proposition}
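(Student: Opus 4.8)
The plan is to exhibit an injection from the $m$-fold Cartesian product $\underbrace{C_w\times C_w\times\dotsb\times C_w}_{m}$ into $C_{w^m}$; since this product has cardinality $|C_w|^m$, the existence of such an injection immediately yields $|C_{w^m}|\ge |C_w|^m$.

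First I would record that all words in $C_w$ share a common length. Indeed, if $w'\in C_w$, then $\Psi_{\Sigma}(w')=\Psi_{\Sigma}(w)$, so by Theorem~\ref{1206a} the second diagonals coincide, whence $|w'|_{a_i}=|w|_{a_i}$ for each $i$. Summing over the alphabet shows that $w'$ and $w$ have the same length $\ell=\sum_{i=1}^{s}|w|_{a_i}$. Next I would define the map $\Phi$ sending a tuple $(w_1,w_2,\dotsc,w_m)\in C_w\times\dotsb\times C_w$ to the concatenation $w_1w_2\dotsm w_m$. Because $\Psi_{\Sigma}$ is a morphism and each $w_i\in C_w$,
$$\Psi_{\Sigma}(w_1w_2\dotsm w_m)=\Psi_{\Sigma}(w_1)\Psi_{\Sigma}(w_2)\dotsm\Psi_{\Sigma}(w_m)=\bigl(\Psi_{\Sigma}(w)\bigr)^m=\Psi_{\Sigma}(w^m),$$
so $w_1w_2\dotsm w_m\in C_{w^m}$ and $\Phi$ is well defined.

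Finally I would verify injectivity. Given the image word $u=w_1w_2\dotsm w_m$, each factor $w_i$ has length exactly $\ell$ by the first step, so $w_i$ is forced to be the factor of $u$ occupying positions $(i-1)\ell+1$ through $i\ell$. Hence the tuple $(w_1,\dotsc,w_m)$ is uniquely recovered from $\Phi(w_1,\dotsc,w_m)$, so $\Phi$ is injective and the inequality follows.

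The only genuinely necessary ingredient is the uniform-length observation, which guarantees that the concatenation can be unambiguously unscrambled into its $m$ blocks; once that is in place the injectivity of $\Phi$ is immediate, so I do not anticipate a real obstacle. It is worth remarking that the inequality can be strict precisely because $C_{w^m}$ may contain words that are not obtained by concatenating $m$ members of $C_w$.
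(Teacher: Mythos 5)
Your proof is correct and follows essentially the same route as the paper: concatenating $m$ words from $C_w$ yields a word $M$\!-equivalent to $w^m$, giving the count $|C_w|^m$. The only difference is that you make explicit the injectivity of the concatenation map via the uniform-length observation, a point the paper's proof leaves implicit; this is a worthwhile clarification, since distinguishing distinct tuples is exactly what justifies the product count.
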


\begin{proof}
Fix a positive integer $m$. If $w_i\equiv_M w$ for all integers $1\le i\le m$, then $w^m=\underbrace{www\cdots w}_\text{$m$ times} \equiv_M w_1w_2\cdots w_m$. Thus, $|C_{w^m}|\ge\underbrace{|C_w||C_w|\,\cdots\, |C_w|}_\text{$m$ times}=|C_w|^m$.
\end{proof}

\begin{remark}
Suppose $\Sigma$ is an ordered alphabet with $|\Sigma|=2$, $w\in\Sigma^+$ and $m$ is a positive integer. If $|C_{w^m}|=|C_w|^m$, then $|C_w|=1$. The converse however does not hold. For instance, let $w=aba$ (clearly, $|C_w|=1$). Then, $C_{w^2}=\{abaaba,aabbaa,baaaab\}$, therefore $|C_{w^2}|=3$.
\end{remark}

\section{A Normal Form of Parikh Matrices}

Suppose $\Sigma$ is an ordered alphabet. In this section, given a Parikh matrix $M\in\mathcal{P}_{\Sigma}$, we aim to decompose $M$ into a product of some other Parikh matrices, each raised to a \mbox{certain} power. For Parikh matrices with entries large enough, the following decomposition is interesting.


\begin{definition}\label{DefNormalForm}
Suppose $\Sigma$ is an ordered alphabet with $|\Sigma|=s$ and $M\in\mathcal{P}_{\Sigma}^+$. 

\vspace{0.7em}\item \begin{itemize}[leftmargin=1em]

\item Define $\mu(M)=\max\{\, n\in\mathbb{N}\,\,|\, M=A\cdot B^n \text{ for some }A\in\mathcal{P}_{\Sigma} \text{ and } B\in\mathcal{P}_{\Sigma}^+\,\}$.

\item Define $\sigma(M)$ to be the sum of the entries in the second diagonal of $M$.

\item Define $\vartheta(M)$ as follows:
\begin{itemize}[leftmargin=1.5em]
\item if $\mu(M)=1$, then 
$\vartheta(M)$ is defined to be the minimum element of the following set:
$$\{\,\sigma(B)\,\,|\,\,B\in\mathcal{P}_{\Sigma}^+ \text{ and }
M=A\cdot B \text{ for some } A\in\mathcal{P}_\Sigma^+\}\text{ with }\mu(A)\neq 1\},  $$
provided it is nonempty; otherwise, it is defined to be $\sigma(M)$;
\item if $\mu(M)>1$, then  $\vartheta(M)$ is defined to be the maximum element of the following set:
$$\{\,\sigma(B)\,\,|\,\,  B\in\mathcal{P}_{\Sigma}^+ \text{ and }
M=A\cdot B^{\mu(M)} \text{ for some } A\in\mathcal{P}_\Sigma\}.$$
\end{itemize}
\item Define $S_M=\{\, (A,B,\mu(M))\,\,|\, A\in\mathcal{P}_\Sigma \text{ and } B\in\mathcal{P}_{\Sigma}^+   \text{ with } M=A\cdot B^{\mu(M)}\text{ and}$ $\sigma(B)=\vartheta(M)\,\}$. 
\end{itemize}

\vspace{0.7em}\noindent Let $k$ be a nonnegative integer. For every integer $0\le i\le k$, suppose $B_i\in\mathcal{P}_\Sigma$ and $n_i\in\mathbb{N}$. We say that $B_{k}^{n_{k}}B_{k-1}^{n_{k-1}}\cdots B_0^{n_0}$ is a \textit{rl-Parikh normal form} of $M$ if and only if the following holds:
\begin{quote}
\centering
Let $A_0=M$, $A_i=B_{k}^{n_{k}}B_{k-1}^{n_{k-1}}\cdots B_i^{n_i}$ ($1\le i\le k$) and $A_{k+1}=I_{s+1}$.\\
Then $(A_{i+1},B_i,n_i)\in S_{A_i}$ for all $0\le i\le k$.
\end{quote}
Equivalently, we say that $M$ is \textit{rl-Parikh normalized} to the form $B_{k}^{n_{k}}B_{k-1}^{n_{k-1}}\cdots B_0^{n_0}$.
\end{definition}

\begin{remark}\label{RemarkCondDefMat}
The requirement $B\in\mathcal{P}_{\Sigma}^+$ in the first item of Definition~\ref{DefNormalForm} eliminates the trivial decomposition of a Parikh matrix $M$ into $M=M\cdot I_{s+1}^n$ at each stage as $n$ does not have an upper bound in this case.
\end{remark}

\begin{remark}\label{RemChopOffMat}
Suppose $\Sigma$ is an ordered alphabet and $M\in\mathcal{P}_\Sigma^+$.
Let $B_{k}^{n_{k}}B_{k-1}^{n_{k-1}}\cdots B_0^{n_0}$ be a $rl$-Parikh normal form of $M$. For any integer $0\le i\le k$, the form $B_{k}^{n_{k}}B_{k-1}^{n_{k-1}}\cdots B_i^{n_i}$ is a $rl$-Parikh normal form of the matrix $B_{k}^{n_{k}}B_{k-1}^{n_{k-1}}\cdots B_i^{n_i}$.
\end{remark}

\begin{remark}\label{RemHighPowMat}
Suppose $\Sigma$ is an ordered alphabet and $M\in\mathcal{P}_\Sigma^+$. If $M=A\cdot B^n$ for some $A\in\mathcal{P}_\Sigma$, $B\in\mathcal{P}_\Sigma^+$ and positive integer $n$, then $\mu(M)\ge n$.
\end{remark}

One can see that the \textit{rl}-Parikh normal form of a Parikh matrix is not necessarily unique. For a trivial example, let $\Sigma=\{a<b\}$ and consider the word $w=abba$. Then, the matrix $\Psi_\Sigma(w)$ has two \textit{rl}-Parikh normal forms, which are $\Psi_\Sigma(a)[\Psi_\Sigma(b)]^2\Psi_\Sigma(a)$ and $\Psi_\Sigma(b)[\Psi_\Sigma(a)]^2\Psi_\Sigma(b)$. 

The following is a feasible approach to find the Parikh normal form(s) of a Parikh matrix for the binary alphabet. (Here, we are only interested in ``nontrivial'' cases where both entries in the second diagonal are nonzero.)

At each stage of decomposition, given a Parikh matrix $M=\begin{pmatrix}
1 & u & t\\ 
0 & 1 & v\\ 
0 & 0 & 1
\end{pmatrix}$ with integers $u,v>0$, we aim to find two other Parikh matrices $A=\begin{pmatrix}
1 & p & r\\ 
0 & 1 & q\\ 
0 & 0 & 1
\end{pmatrix}$ and 
$B=\begin{pmatrix}
1 & x & z\\ 
0 & 1 & y\\ 
0 & 0 & 1
\end{pmatrix}$ such that for some positive integer $n$, we have $M=A\cdot B^n$ where $(A,B,n)\in S_M$.

By Theorem~\ref{PowerTheorem}, we have $B^n=\begin{pmatrix}
1 & nx & nz+\binom{n}{2}xy\\ 
0 & 1 & ny\\ 
0 & 0 & 1
\end{pmatrix}$, thus it follows that $A\cdot B^n=\begin{pmatrix}
1 & p+nx & r+nz+npy+\binom{n}{2}xy\\ 
0 & 1 & q+ny\\ 
0 & 0 & 1
\end{pmatrix}$. Since $M=A\cdot B^n$, the following system holds:
\begin{center}
$\begin{cases}
p+nx=u,\\
q+ny=v,\\
r+nz+npy+\binom{n}{2}xy=t.\\
\end{cases}$
\end{center}
Furthermore, by Theorem~\ref{EntryParikh}, we have 
\begin{center}
$r\le pq, z\le xy$.
\end{center}
We propose the following algorithm to find the solution to the above system.

\begin{algorithm}[H]
\caption{Decomposition of a Parikh Matrix $M$ into $A\cdot B^n$ where $n$ is maximal (for the binary alphabet)}\label{NormalAlgo} 
\begin{algorithmic}[1]
\State \textit{begin}
\State $n \gets \max\{u,v\}$

\State $Z \gets \{\}$

\State $X \gets \{(x,y)\,|\,x,y>0,\,u-nx\ge 0,\,v-ny\ge 0\}$

\While{$X\neq\{\}$}
    \State \textbf{choose} $(x,y)\in X$
    \State $X \gets X\backslash\{(x,y)\}$
    \State $p \gets u-nx$
    \State $q \gets v-ny$
    \State $Y \gets\{(r,z)\,|\,0\le r\le pq,\,0\le z\le xy,\,r+npy+nz+\binom{n}{2}xy=t\}$
    \If{$Y\neq\{\}$} 
        \For{every $(r,z)\in Y$}
        \State $Z \gets Z\cup\{(n,p,q,r,x,y,z)\}$
        \EndFor
    \EndIf
\EndWhile 
  
\If {$Z=\{\}$}
\State $n \gets n-1$
\State \textbf{goto} 4

\Else

  \State \textbf{return} $Z$

\EndIf
\State \textit{end}
\end{algorithmic}
\end{algorithm}

Clearly, each $z\in Z$ corresponds to some $A\in\mathcal{P}_\Sigma$, $B\in\mathcal{P}_\Sigma^+$ and positive integer $n$ such that $M=A\cdot B^n$ and $n=\mu(M)$. It remains to choose the triplet(s) $(A,B,n)$ satisfying the equality $\sigma(B)=\vartheta(M)$ (see Definition~\ref{DefNormalForm}).

\begin{example}\label{ExNormalMat}
The Parikh matrix $$M=\begin{pmatrix}
1 & 8 & 16\\ 
0 & 1 & 3\\ 
0 & 0 & 1
\end{pmatrix}$$ 
has the following Parikh normal forms: 
\vspace{0.4em}
\begin{enumerate}
\item $\begin{pmatrix}
1 & 1 & 0\\ 
0 & 1 & 0\\ 
0 & 0 & 1
\end{pmatrix}^4\cdot 
\begin{pmatrix}
1 & 0 & 0\\ 
0 & 1 & 1\\ 
0 & 0 & 1
\end{pmatrix}\cdot 
\begin{pmatrix}
1 & 2 & 1\\ 
0 & 1 & 1\\ 
0 & 0 & 1
\end{pmatrix}^2;$  

\item $\begin{pmatrix}
1 & 1 & 0\\ 
0 & 1 & 0\\ 
0 & 0 & 1
\end{pmatrix}^2\cdot 
\begin{pmatrix}
1 & 0 & 0\\ 
0 & 1 & 1\\ 
0 & 0 & 1
\end{pmatrix}\cdot 
\begin{pmatrix}
1 & 1 & 0\\ 
0 & 1 & 0\\ 
0 & 0 & 1
\end{pmatrix}^2\cdot
\begin{pmatrix}
1 & 2 & 2\\ 
0 & 1 & 1\\ 
0 & 0 & 1
\end{pmatrix}^2.$  
\end{enumerate}
\end{example}

\begin{theorem}\label{UniqueParikhNormalForm}
Suppose $\Sigma$ is an ordered alphabet. If $w\in\Sigma^*$ is $M$\!-unambiguous, then the \textit{rl}-Parikh normal form of $\Psi_\Sigma(w)$ is unique.
\end{theorem}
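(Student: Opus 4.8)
The plan is to exploit the morphism property of $\Psi_\Sigma$ together with the unambiguity hypothesis to turn every \emph{matrix} factorization of $M=\Psi_\Sigma(w)$ into a genuine \emph{word} factorization of $w$, and then to argue that at each stage the selection set is forced to be a singleton. The whole argument will run by strong induction on $\sigma(M)$. The key preliminary observation I would record first is this: if $M=A\cdot B^{n}$ with $A\in\mathcal{P}_\Sigma$ and $B\in\mathcal{P}_\Sigma^{+}$, then choosing any words $u,v$ with $\Psi_\Sigma(u)=A$, $\Psi_\Sigma(v)=B$ (here $v\neq\lambda$ since $B\neq I_{s+1}$) gives $\Psi_\Sigma(uv^{n})=AB^{n}=M=\Psi_\Sigma(w)$, so $uv^{n}\equiv_M w$; since $w$ is $M$-unambiguous this forces $w=uv^{n}$ as words. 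Moreover, by Theorem~\ref{1206a} the quantity $\sigma(\Psi_\Sigma(x))$ equals the total letter count $|x|$, so $|u|=\sigma(A)$ and $|v|=\sigma(B)$ are read off from the matrices alone.

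The central step I would then prove is that $|S_M|=1$. Every triple $(A,B,\mu(M))\in S_M$ satisfies $\sigma(B)=\vartheta(M)$, and by the observation above it corresponds to a factorization $w=uv^{\mu(M)}$ with $|v|=\sigma(B)=\vartheta(M)$ and $|u|=\sigma(M)-\mu(M)\,\vartheta(M)$. But for a \emph{fixed} power $\mu(M)$ and a \emph{fixed} length $|v|=\vartheta(M)$, such a factorization of the single fixed word $w$ is unique: $u$ must be the prefix of $w$ of the prescribed length and $v$ the ensuing factor of length $\vartheta(M)$. Hence $u,v$, and thus $A=\Psi_\Sigma(u)$ and $B=\Psi_\Sigma(v)$, are uniquely determined, so $S_M$ is a singleton. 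I would note that this reasoning is indifferent to whether $\vartheta(M)$ is taken as a maximum or a minimum, and that it also covers the boundary case $\vartheta(M)=\sigma(M)$, where $u=\lambda$ and $B=M$.

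To feed this into the induction I also need that peeling off a power keeps the remaining factor $M$-unambiguous: if $w=uv^{n}$ is $M$-unambiguous and $u'\equiv_M u$, then $u'v^{n}\equiv_M uv^{n}=w$, forcing $u'v^{n}=w$ and hence $u'=u$, so $u$ is $M$-unambiguous. Now I would fix an arbitrary rl-Parikh normal form $B_{k}^{n_{k}}\cdots B_{0}^{n_{0}}$ of $M$. Its last-peeled triple $(A_1,B_0,n_0)$ lies in $S_M$, which is a singleton, so $B_0$, $n_0$, and $A_1=B_{k}^{n_{k}}\cdots B_{1}^{n_{1}}$ are forced. Writing $w=u_0v_0^{n_0}$ for the corresponding word factorization, we have $A_1=\Psi_\Sigma(u_0)$ with $u_0$ again $M$-unambiguous and $\sigma(A_1)=|u_0|<|w|=\sigma(M)$ (as $v_0\neq\lambda$). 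By Remark~\ref{RemChopOffMat}, $B_{k}^{n_{k}}\cdots B_{1}^{n_{1}}$ is an rl-Parikh normal form of $A_1$, and the induction hypothesis applied to $A_1$ makes it the unique one. Together with the forced values $B_0,n_0$, this shows any two rl-Parikh normal forms of $M$ coincide, closing the induction (the base case $\sigma(M)\le 1$, where the form is $M$ itself or empty, being immediate).

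The part I expect to require the most care is making the matrix--word correspondence watertight, specifically verifying that ``same $\mu(M)$ and same $\sigma(B)$'' really does pin down a single factorization of $w$. Everything rests on the interplay between the unambiguity hypothesis, which upgrades matrix factorizations to genuine word factorizations, and the length identity $|x|=\sigma(\Psi_\Sigma(x))$, which makes the factorization data combinatorially rigid. By contrast, the max/min distinction in the definition of $\vartheta$ and the degenerate $\mu(M)=1$ or $u=\lambda$ cases should reduce to routine bookkeeping once the correspondence is in place.
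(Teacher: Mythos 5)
Your proof is correct and rests on the same engine as the paper's: $M$\!-unambiguity promotes every matrix factorization $M=A\cdot B^{\mu(M)}$ with $\sigma(B)=\vartheta(M)$ into a positional factorization of the single word $w$, and the length identity $|x|=\sigma(\Psi_\Sigma(x))$ then pins down $A$ and $B$. The paper packages exactly this as a contradiction at the first index where two normal forms differ (producing two distinct $M$\!-equivalent words of the same length), whereas you package it as the singleton claim $|S_{A_i}|=1$ plus induction on $\sigma$; the substance is the same.
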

\begin{proof}
Suppose $w$ is $M$\!-unambiguous and let $\Psi_\Sigma(w)=M$. We argue by contradiction. Assume there exist two distinct \textit{rl}-Parikh normal forms of $M$; let them be $B_{k}^{n_{k}}B_{k-1}^{n_{k-1}}\cdots B_0^{n_0}$ and $C_{j}^{m_{kj}}C_{j-1}^{m_{j-1}}\cdots C_0^{m_0}$ respectively. Since they are distinct, it follows that there exists an integer $0\le l\le \min\{j,k\}$ such that 
\begin{enumerate}
\item $n_i=m_i$ and $B_i=C_i$ for all integers $0\le i\le l-1$;
\item $n_l\neq m_l$ or $B_l\neq C_l$.
\end{enumerate}

Let $A\!=\!B_{l-1}^{n_{l-1}}B_{l-2}^{n_{l-2}}\cdots B_0^{n_0}\!=C_{l-1}^{m_{l-1}}C_{l-2}^{m_{l-2}}\cdots C_0^{m_0}$, $B'\!=\!B_{k}^{n_{k}}B_{k-1}^{n_{k-1}}\cdots B_l^{n_l}$ and $C'=C_{j}^{m_{kj}}C_{j-1}^{m_{j-1}}\cdots C_l^{m_l}$.
Then, we have $B'\cdot A=M=C'\cdot A$.
Since Parikh matrices are invertible, it follows that $B'=C'$. By Remark~\ref{RemChopOffMat}, it holds that $n_l=\mu(B')=\mu(C')=m_l$ and $\sigma(B_l)=\vartheta(B')=\vartheta(C')=\sigma(C_l)$. Since $n_l=m_l$, by (2), it must be the case that $B_l\neq C_l$.

Let $v,v'\in\Sigma$ be such that $\Psi_\Sigma(v)=B_l$ and $\Psi_\Sigma(v')=C_l$. Note that $v\neq v'$ because $B_l\neq C_l$. Also, $|v|=|v'|$ because $\sigma(B_l)=\sigma(C_l)$. Let $u,u',y\in\Sigma^*$ be such that $\Psi_\Sigma(u)=B_{k}^{n_{k}}B_{k-1}^{n_{k-1}}\cdots B_{l+1}^{n_{l+1}}$,  $\Psi_\Sigma(u')=C_{j}^{m_{j}}C_{j-1}^{m_{j-1}}\cdots C_{l+1}^{m_{l+1}}$ and $\Psi_\Sigma(y)=A$. Then, $\Psi_\Sigma(uvy)=B'A=M=C'A=\Psi_\Sigma(u'v'y)$. Since $|v|=|v'|$ but $v\neq v'$, it follows that $uvy$ and $u'v'y$ are distinct words. However, this gives us a contradiction as $w$ is $M$\!-unambiguous. Thus our conclusion holds.
\end{proof}

Our final result in this section is a characterization of the following class of Parikh matrices. One can see that this class of Parikh matrices arises naturally by Definition~\ref{DefNormalForm}. 

\begin{definition}\label{DefPrimitive}
Suppose $\Sigma$ is an ordered alphabet and $M\in\mathcal{P}_\Sigma$. We say that $M$ is a \textit{primitive Parikh matrix} if and only if the only \textit{rl}-Parikh normal form of $M$ is $M$ itself.
\end{definition}

\begin{theorem}
Suppose $\Sigma$ is an ordered alphabet and $M\in\mathcal{P}_\Sigma$. The matrix $M$ is a primitive Parikh matrix if and only if every $w\in\Sigma^*$ with $\Psi_\Sigma(w)=M$ is square-free.
\end{theorem}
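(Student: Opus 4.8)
The plan is to reduce the statement to a purely matrix-theoretic characterization of primitivity and then transfer it across the morphism $\Psi_\Sigma$. Concretely, I would first isolate the bridging claim that, for $M\in\mathcal{P}_\Sigma^+$, the matrix $M$ fails to be primitive if and only if $M=P\cdot Q^n\cdot R$ for some $P,R\in\mathcal{P}_\Sigma$, some $Q\in\mathcal{P}_\Sigma^+$, and some integer $n\ge 2$. Granting this claim, the theorem is almost immediate: since $\Psi_\Sigma$ is a surjective morphism onto $\mathcal{P}_\Sigma$, writing $P=\Psi_\Sigma(p)$, $Q=\Psi_\Sigma(q)$ (with $q\neq\lambda$ as $Q\in\mathcal{P}_\Sigma^+$) and $R=\Psi_\Sigma(r)$ shows that $M=PQ^nR$ holds exactly when some word representing $M$ has the form $pq^nr$, i.e. contains the square $qq$, i.e. is not square-free. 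Taking contrapositives on both sides then yields that $M$ is primitive if and only if every representing word is square-free. The trivial case $M=I_{s+1}$ (whose only representing word is $\lambda$, which is square-free) I would dispatch separately.

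The substance is therefore the bridging claim, which I would obtain from the following reformulation of primitivity: $M\in\mathcal{P}_\Sigma^+$ is primitive if and only if $\mu(M)=1$ and $\sigma(M)=\vartheta(M)$. To prove this I would first record that $\sigma$ is additive, $\sigma(AB)=\sigma(A)+\sigma(B)$, because each second-diagonal entry of a product is the sum of the corresponding entries of the factors; consequently $\sigma(A)=0$ forces $A=I_{s+1}$, since an all-zero second diagonal means every letter count vanishes. If $\mu(M)=1$ and $\sigma(M)=\vartheta(M)$, then any triple $(A_1,B_0,n_0)\in S_M$ has $n_0=\mu(M)=1$ and $\sigma(B_0)=\vartheta(M)=\sigma(M)$, whence $\sigma(A_1)=0$, so $A_1=I_{s+1}$ and $B_0=M$; thus the only rl-Parikh normal form is $M$ itself and $M$ is primitive. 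For the converse I would show that if either $\mu(M)\ge 2$, or $\mu(M)=1$ with $\sigma(M)\neq\vartheta(M)$, then some rl-normal form differs from $M$: in the first case $n_0=\mu(M)\ge 2$ already; in the second, since $\sigma(M)\neq\vartheta(M)$ the set defining $\vartheta(M)$ is nonempty, and its minimiser $B_0$ satisfies $\sigma(B_0)=\vartheta(M)<\sigma(M)$, hence $B_0\neq M$, with the uniquely determined cofactor $A_1=MB_0^{-1}\in\mathcal{P}_\Sigma^+$ having $\mu(A_1)\ge 2$, which forces a strictly longer form. Termination of the decomposition (so that full normal forms exist) follows because $\sigma(A_{i+1})=\sigma(A_i)-n_i\sigma(B_i)$ strictly decreases.

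With this reformulation in hand, the bridging claim follows by translating each side into a factorisation with exponent at least two. If $M=PQ^nR$ with $n\ge 2$ and $Q\in\mathcal{P}_\Sigma^+$, then by Remark~\ref{RemHighPowMat} the matrix $PQ^n$ satisfies $\mu(PQ^n)\ge n\ge 2$; when $R=I_{s+1}$ this gives $\mu(M)\ge 2$ directly, and when $R\neq I_{s+1}$ the factorisation $M=(PQ^n)\cdot R$ exhibits a cofactor of $\mu$-value at least two, forcing $\vartheta(M)\le\sigma(R)<\sigma(M)$; either way $M$ is non-primitive. Conversely, if $M$ is non-primitive I read a power at least two off its normal form: if $n_0=\mu(M)\ge 2$ take $M=A_1B_0^{n_0}$; otherwise $n_0=1$ but the first cofactor $A_1$ has $\mu(A_1)=n_1\ge 2$, giving $M=A_2B_1^{n_1}B_0=PQ^{n_1}R$. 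I expect the main obstacle to be precisely this buried-square situation, where $\mu(M)=1$ yet $M$ is non-primitive (the prototype being $M=\Psi_\Sigma(aab)$): here the offending power lies strictly inside the word rather than at its end, so primitivity genuinely hinges on the auxiliary invariant $\vartheta$ and not on $\mu$ alone, and the crux is to verify that the $\vartheta$-minimisation selects exactly those decompositions whose cofactor conceals a square.
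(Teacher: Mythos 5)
Your proposal is correct, and it supplies an actual argument where the paper offers none: the paper's entire proof is the sentence ``This is straightforward by Definition~\ref{DefNormalForm} and Definition~\ref{DefPrimitive}.'' Your bridging claim --- $M\in\mathcal{P}_\Sigma^+$ is non-primitive if and only if $M=P\cdot Q^n\cdot R$ with $P,R\in\mathcal{P}_\Sigma$, $Q\in\mathcal{P}_\Sigma^+$ and $n\ge 2$ --- is exactly the right intermediate statement, and your reformulation of primitivity as ``$\mu(M)=1$ and $\sigma(M)=\vartheta(M)$'' (equivalently, the set $\Gamma$ defining $\vartheta(M)$ is empty) is the correct unpacking of Definitions~\ref{DefNormalForm} and~\ref{DefPrimitive}; the additivity of $\sigma$ and the fact that $\sigma(A)=0$ forces $A=I_{s+1}$ are what make the whole thing work, and the transfer across $\Psi_\Sigma$ via surjectivity is immediate. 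One small imprecision: in reading a square off a non-primitive $M$ with $\mu(M)=1$, you assert that the cofactor $A_1=MB_0^{-1}$ of a $\vartheta$-minimiser necessarily has $\mu(A_1)\ge 2$, and later that ``the first cofactor $A_1$ has $\mu(A_1)=n_1\ge 2$''; neither is guaranteed, because $S_M$ only requires $\sigma(B_0)=\vartheta(M)$ and does not re-impose $\mu(A_1)\neq 1$, so a different decomposition achieving the same minimal $\sigma$-value could have a cofactor with $\mu$-value $1$. This does not damage the proof: once you know $\Gamma\neq\emptyset$ (which follows from $\vartheta(M)<\sigma(M)$), the definition of $\Gamma$ itself hands you \emph{some} factorisation $M=A'B'$ with $\mu(A')\ge 2$, hence $M=PQ^nR$ with $n\ge 2$, which is all the bridging claim needs. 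With that one step rerouted through $\Gamma$ rather than through the specific normal form, the argument is complete and, in my view, is the proof the paper should have printed.
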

\begin{proof}
This is straightforward by Definition~\ref{DefNormalForm} and Definition~\ref{DefPrimitive}.
\end{proof}

\section{A Normal Form of Words}

In this section, we introduce a notion analogous to the one in Section 4 - in the perspective of words.


\begin{definition}\label{DefNormalFormWord}
Suppose $\Sigma$ is an alphabet and $w\in\Sigma^+$. 

\begin{itemize}[leftmargin=1em]
\item Define $R_w=\{\,(u,v,n)\in\Sigma^*\times\Sigma^+\times \mathbb{N}\,\,|\,\,w=uv^n\,\}$.

\item Define $\tau(w)=\max\{\,n\in\mathbb{N}\,\,|\,(u,v,n)\in R_w \text{ for some }u\in\Sigma^* \text{ and } v\in\Sigma^+\,\}$.

\item Define $\theta(w)$ as follows:
\begin{itemize}
\item if $\tau(w)=1$, then $\theta(w)$ is defined to be the minimum element of the following set:
$$\{\,|v|\,\,|\,\,v\in\Sigma^+ \text{ and }
w=uv \text{ for some } u\in\Sigma^+\text{ with }\tau(u)\neq 1\},$$
provided it is nonempty; otherwise $\theta(w)=|w|$.
\item if $\tau(w)>1$, then $\theta(w)$ is defined to be the maximum element of the following set:
$$\{\,|v|\,\,|\,\,v\in\Sigma^+ \text{ and }
w=uv^{\tau(w)}\text{ for some } u\in\Sigma^+\}.$$
\end{itemize}

\item Define $\rho(w)=(u',v',\tau(w))$ to be the unique triplet in $R_w$ such that $|v'|=\theta(w)$.

\item Let $w_0=w$ and $(w_1,v_0,n_0)=\rho(w_0)$. For all integers $i\ge 1$ and while $w_i\neq \lambda$, recursively define $(w_{i+1},v_i,n_i)=\rho(w_i)$. Let $k\ge 1$ be the largest integer such that $w_k\neq \lambda$. 
\end{itemize}
We say that $v_{k}^{n_{k}}v_{k-1}^{n_{k-1}}\cdots v_0^{n_0}$ is the \textit{rl-Parikh normal form} of $w$, denoted by $\pn_r(w)$. Equivalently, we say that $w$ is \textit{rl-Parikh normalized} to the form $v_{k}^{n_{k}}v_{k-1}^{n_{k-1}}\cdots v_0^{n_0}$.
\end{definition}

\begin{remark}
The requirement $v\in\Sigma^+$ in the first item of Definition~\ref{DefNormalFormWord} eliminates the trivial decomposition of a word $w$ into $w=w\cdot \lambda^n$ at each stage as $n$ does not have an upper bound in this case.
\end{remark}

\begin{remark}\label{RemChopOffWord}
Suppose $\Sigma$ is an ordered alphabet and $w\in\Sigma^+$.
Let $\pn_r(w)=v_{k}^{n_{k}}v_{k-1}^{n_{k-1}}\cdots v_0^{n_0}$. For any integer $0\le i\le k$, $\pn_r(v_{k}^{n_{k}}v_{k-1}^{n_{k-1}}\cdots v_i^{n_i})=v_{k}^{n_{k}}v_{k-1}^{n_{k-1}}\cdots v_i^{n_i}$.
\end{remark}

\begin{remark}\label{RemHighPowWord}
Suppose $\Sigma$ is an ordered alphabet and $w\in\Sigma^+$. If $w=uv^n$ for some $u\in\Sigma^*$, $v\in\Sigma^+$ and positive integer $n$, then $\tau(w)\ge n$.
\end{remark}

\begin{example}
Suppose $\Sigma=\{a,b,c\}$. Then, we have $\pn_r(bbabbabba)=(bba)^3$, $\pn_r(acccabab)=ac^3(ab)^2$ and $\pn_r(cbcbbaabaaba)=(cb)^2ba(aba)^2$. In the last case, it is understood that the $rl$-Parikh normal form of the word $cbcbbaabaaba$ is $(cb)^2(ba)^1(aba)^2$ and not $(cb)^2b^1a^1(aba)^2$.
\end{example}

The next theorem establishes a significant relation between the \textit{rl}-Parikh normal form of a word and the \textit{rl}-Parikh normal form(s) of the Parikh matrix corresponding to that word.

\begin{definition}\label{DefRelationWord}
Suppose $\Sigma$ is an alphabet and $w,w'\in\Sigma^*$ are distinct words such that $w$ and $w'$ are $M$\!-equivalent. Let $\pn_r(w)=v_{k}^{n_{k}}v_{k-1}^{n_{k-1}}\cdots v_0^{n_0}$ and $\pn_r(w')={y}_j^{m_j}{y}_{j-1}^{m_{j-1}}\cdots {y}_0^{m_0}$.
We write $w\prec w'$ if and only if there exists an integer $0\le N\le \min\{j,k\}$ such that 
\begin{enumerate}
\item $n_i=m_i$ and $v_i=y_i$ for all integers $0\le i\le N-1$; and 
\item either of the following holds:
\begin{itemize}
\item[(i)] $n_N<m_N$;
\item[(ii)] $n_N=m_N=1$ and $|v_N|>|y_N|$;
\item[(iii)] $n_N=m_N>1$ and $|v_N|<|y_N|$.
\end{itemize}
\end{enumerate}
\end{definition}

\begin{definition}
Suppose $\Sigma$ is an ordered alphabet. We say that the word $w$ is \textit{maximal} with respect to the relation $\prec$ (or simply \textit{$\prec$-maximal}), if and only if there exists no other word $w'\in C_w$ such that $w\prec w'$.
\end{definition}

\begin{theorem}\label{RelNormalMatWord}
Suppose $\Sigma$ is an ordered alphabet and $w\in\Sigma^*$. Let $\pn_r(w)=v_{k}^{n_{k}}v_{k-1}^{n_{k-1}}\cdots v_0^{n_0}$. If $w$ is $\prec$-maximal, then $[\Psi_\Sigma(v_{k})]^{n_{k}}[\Psi_\Sigma(v_{k-1})]^{n_{k-1}}\cdots [\Psi_\Sigma(v_0)]^{n_0}$ is an \textit{rl}-Parikh normal form of $\Psi_\Sigma(w)$.
\end{theorem}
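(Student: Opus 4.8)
The plan is to show that, writing $B_i=\Psi_\Sigma(v_i)$ and $A_i=\Psi_\Sigma(w_i)$ where $w_i=v_k^{n_k}\cdots v_i^{n_i}$ and $w_{k+1}=\lambda$, the triple $(A_{i+1},B_i,n_i)$ lies in $S_{A_i}$ for every $0\le i\le k$, which is exactly what Definition~\ref{DefNormalForm} demands. Since $\Psi_\Sigma$ is a morphism we get $A_i=A_{i+1}B_i^{n_i}$ for free, so the content is to verify the two numerical conditions $n_i=\mu(A_i)$ and $\sigma(B_i)=\vartheta(A_i)$. The bridge between the word side and the matrix side is the elementary identity $\sigma(\Psi_\Sigma(v))=\sum_{q}|v|_{a_q}=|v|$, together with the observation (from Remark~\ref{RemHighPowWord} and Remark~\ref{RemHighPowMat}) that $\tau(x)\le\mu(\Psi_\Sigma(x))$ for every word $x$. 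Thus it suffices to prove the two identities $\mu(A_i)=\tau(w_i)$ and $\vartheta(A_i)=\theta(w_i)$, because by construction (Definition~\ref{DefNormalFormWord}) one has $\tau(w_i)=n_i$ and $\theta(w_i)=|v_i|=\sigma(B_i)$.

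The key mechanism throughout is \emph{lifting}: any matrix factorization $A_i=A'\cdot(B')^m$ is realized by words $u',v'$ with $\Psi_\Sigma(u')=A'$ and $\Psi_\Sigma(v')=B'$, giving $w_i\equiv_M u'(v')^m$; and when the matrix side imposes $\mu(A')\neq1$ we may choose $u'$ to itself be a high power (by lifting a witnessing decomposition of $A'$) so that $\tau(u')\neq1$ as well. I would first settle the base case $i=0$, namely $\mu(M)=\tau(w)$ and $\vartheta(M)=\theta(w)$ for $M=\Psi_\Sigma(w)$. For $\mu(M)=\tau(w)$, the inequality $\ge$ is Remark~\ref{RemHighPowMat} applied to $M=A_1B_0^{n_0}$, while if $\mu(M)=m>\tau(w)$ then the lift produces $w''\equiv_M w$ with $\tau(w'')\ge m$, so the leading power of $\pn_r(w'')$ strictly exceeds $n_0$; by clause (i) of Definition~\ref{DefRelationWord} this gives $w\prec w''$, contradicting $\prec$-maximality. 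For $\vartheta(M)=\theta(w)$ the comparison is made at block $0$ \emph{after} the powers match: using $\tau(x)\le\mu(\Psi_\Sigma(x))$ one checks that every lifted competitor $w^\ast$ satisfies $\tau(w^\ast)=\tau(w)$, so the leading powers agree, and then an over-long factor (when $\tau(w)>1$) or an over-short factor (when $\tau(w)=1$) triggers clause (iii) resp.\ (ii) and again contradicts maximality. The $\tau(w)=1$ subcase additionally requires matching the nonemptiness hypotheses ($\mu(A)\neq1$ versus $\tau(u)\neq1$), which is precisely where the high-power lift is used.

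With the base case in hand I would induct on $|w|$ (equivalently on $k$) through the reduction to $w_1$, whose normal form $v_k^{n_k}\cdots v_1^{n_1}$ represents $A_1$ and which is a proper prefix of $w$. The crucial claim is that $\prec$-maximality of $w$ forces $\prec$-maximality of $w_1$. If $w_1\prec w_1'$ for some $w_1'\equiv_M w_1$, set $w'=w_1'v_0^{n_0}\equiv_M w$; one shows that $\rho$ strips $v_0^{n_0}$ from $w'$ exactly as it does from $w$, so that $\pn_r(w')=\pn_r(w_1')\,v_0^{n_0}$ and $\pn_r(w)=\pn_r(w_1)\,v_0^{n_0}$ share block $0$ and then differ precisely where $\pn_r(w_1)$ and $\pn_r(w_1')$ differ, giving $w\prec w'$ and a contradiction. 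That $\rho(w')=(w_1',v_0,n_0)$ is forced uses the base-case equality $\mu(M)=\tau(w)=n_0$, which caps $\tau(w')\le\mu(\Psi_\Sigma(w'))=\mu(M)=n_0$ and hence pins $\tau(w')=n_0$; the value $\theta(w')$ is then pinned by the base-case equality $\vartheta(M)=|v_0|$ (no competing factor can beat $|v_0|$ without beating $\vartheta(M)$), and the uniqueness clause in Definition~\ref{DefNormalFormWord} identifies the remainder as $w_1'$. When $n_0=1$ one also needs $\tau(w_1')\neq1$, which holds because $w_1\prec w_1'$ forces the leading power of $w_1'$ to be at least $2$. Granting the claim, the theorem applied inductively to $w_1$ supplies $(A_{i+1},B_i,n_i)\in S_{A_i}$ for $1\le i\le k$, and the base case supplies it for $i=0$.

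The main obstacle is this inheritance of $\prec$-maximality, that is, controlling the \emph{global} quantities $\tau$ and $\theta$ of the perturbed word $w'=w_1'v_0^{n_0}$: a priori, splicing a different $M$-equivalent prefix onto the shared suffix could create a longer suffix-power at the junction and derail the first step of $\rho$. The resolution is that the matrix invariant $\mu$ depends only on $M$ and not on the representative word, so the already-established base-case identities $\mu(M)=\tau(w)$ and $\vartheta(M)=|v_0|$ act as rigid ceilings that forbid any such junction anomaly. A secondary, more bookkeeping-heavy difficulty is the faithful translation between the matrix-level side conditions ($A\in\mathcal{P}_\Sigma^+$, $\mu(A)\neq1$) and the word-level side conditions ($u\in\Sigma^+$, $\tau(u)\neq1$) in the $\mu=\tau=1$ branch, including the correspondence of the two ``provided it is nonempty'' clauses; both directions are handled by the high-power lifting described above.
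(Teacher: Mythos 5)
Your proposal is correct and rests on the same core mechanism as the paper's proof: lift a competing matrix factorization $A_i=A\cdot B^{m}$ to a word $uv^{m}$, splice it onto the shared suffix to obtain $w'\equiv_M w$, and use the three clauses of Definition~\ref{DefRelationWord} together with $\tau(x)\le\mu(\Psi_\Sigma(x))$ to derive $w\prec w'$ and contradict maximality; the case split on $\mu=1$ versus $\mu>1$, and on the set $\Gamma$ being empty or not, is the same in both arguments. The one genuine structural difference is organizational: the paper verifies $(A_{i+1},B_i,n_i)\in S_{A_i}$ for each $i$ directly, re-running an inline ``blocks $0,\dots,i-1$ of $\pn_r(w')$ and $\pn_r(w)$ agree'' argument at every level, whereas you prove only the top-level identities $\mu(M)=\tau(w)$ and $\vartheta(M)=\theta(w)$ and then induct on $k$ through an inheritance lemma asserting that $\prec$-maximality of $w$ passes to $w_1=v_k^{n_k}\cdots v_1^{n_1}$. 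Your version buys a cleaner statement and avoids the paper's repeated ``arguing continuously like this'' step, at the cost of proving the inheritance lemma, whose proof (pinning $\rho(w')=(w_1',v_0,n_0)$ via the ceilings $\tau(w')\le\mu(M)=n_0$ and $\theta(w')=\vartheta(M)=|v_0|$) is essentially that same inline computation run once. Your treatment of the delicate points --- that $w_1\prec w_1'$ forces $\tau(w_1')\neq1$ when $n_0=1$, and the matching of the matrix-level and word-level ``provided it is nonempty'' clauses via the high-power lift --- is faithful to what the paper actually needs, so I see no gap.
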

\begin{proof}
(The notations used here follow from Definition~\ref{DefNormalForm} and Definition~\ref{DefNormalFormWord}.)

Suppose $w$ is $\prec$-maximal. Let $A_0=\Psi_\Sigma(w)$, $A_i=[\Psi_\Sigma(v_{k})]^{n_{k}}[\Psi_\Sigma(v_{k-1})]^{n_{k-1}}\cdots$ $[\Psi_\Sigma(v_i)]^{n_i}$ ($1\le i\le k$) and $A_{k+1}=I_{s+1}$. By Definition~\ref{DefNormalForm}, we need to show that $(A_{i+1},\Psi_\Sigma(v_i),n_i)\in S_{A_i}$ for all $0\le i\le k$.

Fix an arbitrary index $i$. To deduce that $(A_{i+1},\Psi_\Sigma(v_i),n_i)\in S_{A_i}$, we need to show that 
\begin{center}
(i) $A_{i+1}\cdot [\Psi_\Sigma(v_i)]^{n_i}=A_i$; (ii) $n_i=\mu(A_i)$; and (iii) $\sigma(\Psi_\Sigma(v_i))=\vartheta(A_i)$.
\end{center}

\noindent(i) We have
\begin{align*}
A_{i+1}\cdot [\Psi_\Sigma(v_i)]^{n_i}&=\underbrace{[\Psi_\Sigma(v_{k})]^{n_{k}}[\Psi_\Sigma(v_{k-1})]^{n_{k-1}}\cdots [\Psi_\Sigma(v_{i+1})]^{n_{i+1}}}_{A_{i+1}}\cdot [\Psi_\Sigma(v_i)]^{n_i}\\
&=[\Psi_\Sigma(v_{k})]^{n_{k}}[\Psi_\Sigma(v_{k-1})]^{n_{k-1}}\cdots [\Psi_\Sigma(v_{i})]^{n_{i}}=A_i.
\end{align*}

\noindent(ii) We argue by contradiction. Assume $n_i\neq \mu(A_i)$. By definition, if $n_i>\mu(A_i)$, then $n_i>\max\{\, n\in\mathbb{N}\,\,|\, A_i=A\cdot B^n \text{ for some }A\in\mathcal{P}_{\Sigma} \text{ and } B\in\mathcal{P}_{\Sigma}^+\,\}$. This is a contradiction as $A_i=A_{i+1}\cdot [\Psi_\Sigma(v_i)]^{n_i}$. 

Assume $n_i<\mu(A_i)$. By definition, there exist $A\in\mathcal{P}_\Sigma$ and $B\in\mathcal{P}_\Sigma^+$ such that $A_i=A\cdot B^{\mu(A_i)}$. Choose $u\in\Sigma^*$ and $v\in\Sigma^+$ such that $\Psi_\Sigma(u)=A$ and $\Psi_\Sigma(v)=B$. Thus we have $\Psi_\Sigma(uv^{\mu(A_i)})=A_i=\Psi_\Sigma(v_{k}^{n_{k}}v_{k-1}^{n_{k-1}}\cdots v_i^{n_i})$. By Remark~\ref{RemChopOffWord}, it holds that $\pn_r(v_{k}^{n_{k}}v_{k-1}^{n_{k-1}}\cdots v_i^{n_i})=v_{k}^{n_{k}}v_{k-1}^{n_{k-1}}\cdots v_i^{n_i}$. 

Let $w'=uv^{\mu(A_i)}v_{i-1}^{n_{i-1}}v_{i-2}^{n_{i-2}}\cdots v_0^{n_0}$. Since $uv^{\mu(A_i)}\equiv_Mv_{k}^{n_{k}}v_{k-1}^{n_{k-1}}\cdots v_i^{n_i}$, it follows by the right invariance of $M$\!-equivalence that 
$$w'=\underbrace{uv^{\mu(A_i)}v_{i-1}^{n_{i-1}}v_{i-2}^{n_{i-2}}\cdots v_1^{n_1}}_{u'} v_0^{n_0}\equiv_Mv_{k}^{n_{k}}v_{k-1}^{n_{k-1}}\cdots v_i^{n_i}v_{i-1}^{n_{i-1}}v_{i-2}^{n_{i-2}}\cdots v_1^{n_1}v_0^{n_0}=w.$$
Let $\pn_r(w')=y_{k'}^{m_{k'}}y_{k'-1}^{m_{k'-1}}\cdots y_1^{m_1}y_0^{m_0}$. Since $w'=u'v_0^{n_0}$, by Remark~\ref{RemHighPowWord}, it follows that $m_0\ge n_0$. If $m_0>n_0$, then by Definition~\ref{DefRelationWord}, we have $w\prec w'$ which is a contradiction as $w$ is maximal. Thus $m_0=n_0$. 

\begin{case}$m_0=n_0=1$.\\
Since $m_0=1$, by Definition~\ref{DefNormalFormWord}, it follows that $m_1=\tau(y_{j}^{m_{j}}y_{j-1}^{m_{j-1}}\cdots y_1^{m_1})>1$. Thus, by Definition~\ref{DefNormalFormWord} again, it holds that $|y_0|=\theta(w')\le|v_0|$. 
If $|y_0|<|v_0|$, then by Definition~\ref{DefRelationWord}, we have $w\prec w'$ which is a contradiction. Thus $|y_0|=|v_0|$.
\end{case}

\begin{case}$m_0=n_0>1$.\\
Then, by Definition~\ref{DefNormalFormWord}, it follows that $|y_0|=\theta(w')\ge|v_0|$. If $|y_0|>|v_0|$, then by Definition~\ref{DefRelationWord}, we have $w\prec w'$ which is a contradiction. Thus, $|v_0|=|y_0|$.
\end{case}
In both cases, we have $|v_0|=|y_0|$. Since both $v_0$ and $y_0$ are suffixes of the word $v_i^{n_i}v_{i-1}^{n_{i-1}}v_{i-2}^{n_{i-2}}\cdots v_1^{n_1}v_0^{n_0}$, it follows that $v_0=y_0$. Since $n_0=m_0$ and $v_0=y_0$, by similar argument as above, it can be shown that $m_1=n_1$ and $v_1=y_1$. Arguing continuously like this, we have $y_k=v_k$ and $m_k=n_k$ for all $i-1\le k\le 0$. 

By our assumption, we have $n_i<\mu(A_i)$. Meanwhile by Definition~\ref{DefNormalFormWord}, we have $m_i=\tau(uv^{\mu(A_i)})\ge \mu(A_i)$. Thus, $n_i<\mu(A_i)\le m_i$. By Definition~\ref{DefRelationWord}, it follows that $w\prec w'$ which is a contradiction. Therefore, we conclude that $n_i=\mu(A_i)$. 

\vspace{0.5em}\noindent(iii) Note that since the second diagonal of the Parikh matrix of a word contains the Parikh vector of that word, it follows that $\sigma(\Psi_\Sigma(x))=|x|$ for any $x\in\Sigma^*$. We now argue by contradiction. Assume $\sigma(\Psi_\Sigma(v_i))\neq\vartheta(A_i)$. 

\begin{case2}$\mu(A_i)=n_i=1.$\\
Consider the set
\begin{align*}
\Gamma=\{\,\sigma(B)\,\,|\,\,B\in\mathcal{P}_{\Sigma}^+ \text{ and }
A_i=A\cdot B \text{ for some } A\in\mathcal{P}_\Sigma^+\text{ with }\mu(A)\neq 1\}
\end{align*}
in Definition~\ref{DefNormalForm}.
\begin{subcase2}The set $\Gamma$ is nonempty.\\
By Definition~\ref{DefNormalForm}, it holds that $\vartheta(A_i)$ is the minimum element of the set $\Gamma$.
Therefore, if $\sigma(\Psi_\Sigma(v_i))<\vartheta(A_i)$, then it is a contradiction as $A_i=A_{i+1}\cdot \Psi_\Sigma(v_i)$. 

Assume $\sigma(\Psi_\Sigma(v_i))>\vartheta(A_i)$. Since set $\Gamma$ is nonempty, there exist $A,B\in\mathcal{P}_\Sigma^+$ such that $A_i=A\cdot B$ with $\mu(A)\neq 1$ and $\sigma(B)=\vartheta(A_i)$. Since $\mu(A)\neq 1$, it follows that $A=A'\cdot B'^{n'}$ for some $A'\in\mathcal{P}_\Sigma$, $B'\in\mathcal{P}_\Sigma^+$ and integer $n'>1$.
Choose $u'\in\Sigma^*$ and $v',v\in\Sigma^+$ such that $\Psi_\Sigma(u')=A'$, $\Psi_\Sigma(v')=B'$ and $\Psi_\Sigma(v)=B$. Thus we have $\Psi_\Sigma(u'v'^{n'}v)=A_i=\Psi_\Sigma(v_{k}^{n_{k}}v_{k-1}^{n_{k-1}}\cdots v_i)$.
Note that $|v|<|v_i|$ because
\begin{align*}
|v|=\sigma(\Psi_\Sigma(v))=\sigma(B)=\vartheta(A_i)<\sigma(\Psi_\Sigma(v_i))=|v_i|.
\end{align*}

Let $w'=u'v'^{n'}vv_{i-1}^{n_{i-1}}v_{i-2}^{n_{i-2}}\cdots v_0^{n_0}$. Since $u'v'^{n'}v\equiv_Mv_{k}^{n_{k}}v_{k-1}^{n_{k-1}}\cdots v_{i+1}^{n_{i+1}}v_i$, it follows by the right invariance of $M$\!-equivalence that 
$$w'=u'v'^{n'}vv_{i-1}^{n_{i-1}}v_{i-2}^{n_{i-2}}\cdots v_1^{n_1} v_0^{n_0}\equiv_Mv_{k}^{n_{k}}v_{k-1}^{n_{k-1}}\cdots v_{i+1}^{n_{i+1}}v_iv_{i-1}^{n_{i-1}}v_{i-2}^{n_{i-2}}\cdots v_1^{n_1}v_0^{n_0}=w.$$
Let $\pn_r(w')=y_{k'}^{m_{k'}}y_{k'-1}^{m_{k'-1}}\cdots y_0^{m_0}$. By similar argument as in (ii), it can be shown that $y_j=v_j$ and $m_j=n_j$ for all $0\le j\le i-1$. Now, as for $m_i$, if $m_i>1=n_i$, then $w\prec w'$ by Definition~\ref{DefRelationWord} and thus a contradiction. On the other hand, if $m_i=1$, then since $n'>1$, it follows that $|y_i|=\theta(u'v'^{n'}v)\le |v|$. Since $|v|<|v_i|$, it follows that $|y_i|\le |v|<|v_i|$.
By Definition~\ref{DefRelationWord}, again it follows that $w\prec w'$ which is a contradiction.
\end{subcase2}

\begin{subcase2}The set $\Gamma$ is empty.\\
Note that $A_i=[\Psi_\Sigma(v_{k})]^{n_{k}}[\Psi_\Sigma(v_{k-1})]^{n_{k-1}}\cdots [\Psi_\Sigma(v_i)]^{n_i}$. Since $n_i=1$ and the set $\Gamma$ is empty, it follows that $n_j=1$ for all $i\le j\le k$. Meanwhile, by Remark~\ref{RemChopOffWord}, it holds that $\pn_r(v_{k}^{n_{k}}v_{k-1}^{n_{k-1}}\cdots v_i^{n_i})=v_{k}^{n_{k}}v_{k-1}^{n_{k-1}}\cdots v_i^{n_i}$. Therefore, since $n_j=1$ for all $i\le j\le k$, it must be the case that $i=k$. That is to say, $A_i=\Psi_\Sigma(v_i)$.

Since the set $\Gamma$ is empty, by Definition~\ref{DefNormalForm}, we have $\vartheta(A_i)=\sigma(A_i)$. Then, $\vartheta(A_i)=\sigma(A_i)=\sigma(\Psi_\Sigma(v_i))$, thus a contradiction.
\end{subcase2}
\end{case2}

\begin{case2}$\mu(A_i)=n_i>1.$\\
Then, by Definition~\ref{DefNormalForm}, it holds that $$\vartheta(A_i)=\max\{\,\sigma(B)\,\,|\,\,B\in\mathcal{P}_{\Sigma}^+ \text{ and }
A_i=A\cdot B^{n_i} \text{ for some } A\in\mathcal{P}_\Sigma\}.$$ 
Thus if $\sigma(\Psi_\Sigma(v_i))>\vartheta(A_i)$, then it is a contradiction as $A_i=A_{i+1}\cdot [\Psi_\Sigma(v_i)]^{n_i}$. 

Assume $\sigma(\Psi_\Sigma(v_i))<\vartheta(A_i)$. Let $A\in\mathcal{P}_\Sigma$ and $B\in\mathcal{P}_\Sigma^+$ be such that $A_i=A\cdot B^{n_i}$ with $\sigma(B)=\vartheta(A_i)$. Choose $u\in\Sigma^*$ and $v\in\Sigma^+$ such that $\Psi_\Sigma(u)=A$ and $\Psi_\Sigma(v)=B$. Thus we have $\Psi_\Sigma(uv^{n_i})=A_i=\Psi_\Sigma(v_{k}^{n_{k}}v_{k-1}^{n_{k-1}}\cdots v_i^{n_i})$. By Remark~\ref{RemHighPowWord}, it holds that $\tau(uv^{n_i})\ge n_i$. Assume $\tau(uv^{n_i})>n_i$. Then we have $uv^{n_i}=u'v'^{n'}$ for some $u\in\Sigma^*$ and $v\in\Sigma^+$ where $n'=\tau(uv^{n_i})$. However note that $A_i=\Psi_\Sigma(u')\cdot [\Psi_\Sigma(v')]^{n'}$ and $n'=\tau(uv^{n_i})> n_i=\mu(A_i)$. This is a contradiction by the definition of $\mu(A_i)$. Thus $\tau(uv^{n_i})=n_i$.

Let $w'=uv^{n_i}v_{i-1}^{n_{i-1}}v_{i-2}^{n_{i-2}}\cdots v_0^{n_0}$. Since $uv^{n_i}\equiv_Mv_{k}^{n_{k}}v_{k-1}^{n_{k-1}}\cdots v_i^{n_i}$, it follows by the right invariance of $M$\!-equivalence that 
$$w'=uv^{n_i}v_{i-1}^{n_{i-1}}v_{i-2}^{n_{i-2}}\cdots v_1^{n_1} v_0^{n_0}\equiv_Mv_{k}^{n_{k}}v_{k-1}^{n_{k-1}}\cdots v_i^{n_i}v_{i-1}^{n_{i-1}}v_{i-2}^{n_{i-2}}\cdots v_1^{n_1}v_0^{n_0}=w.$$

Let $\pn_r(w')=y_{k'}^{m_{k'}}y_{k'-1}^{m_{k'-1}}\cdots y_0^{m_0}$. By similar argument as in (ii), it can be shown that $y_k=v_k$ and $m_k=n_k$ for all $0\le k\le i-1$. Furthermore, we have  $\tau(uv^{n_i})=n_i=\tau(v_{k}^{n_{k}}v_{k-1}^{n_{k-1}}\cdots v_i^{n_i})$ and $|v|>|v_i|$. Thus by Definition~\ref{DefRelationWord}, it follows that $w\prec w'$ which is a contradiction.
\end{case2}
In both cases, we obtain a contradiction. Thus $\sigma(\Psi_\Sigma(v_i))=\vartheta(A_i)$. Since (i), (ii) and (iii) hold, our conclusion follows.
\end{proof}

\begin{example}
Suppose $\Sigma=\{a<b\}$. Consider the Parikh matrix $M$ stated in Example~\ref{ExNormalMat}. That matrix $M$ represents the following $M$\!-equivalent words:
\begin{align*}
&aaaaabbabaa,\,\,aaaabaabbaa,\,\,aaaababaaba,\,\,aaaabbaaaab,\,\,aaabaaababa,\\
&aaabaabaaab,\,\,aabaaaaabba,\,\,aabaaaabaab,\,\,abaaaaaabab,\,\,baaaaaaaabb.
\end{align*}
Rewriting the above words in their respective Parikh normal forms, we have 
\vspace{0.5em}\begin{center}
\begin{tabular}{ccccc}
$a^5b^2aba^2$,  & $a^4ba^2b^2a^2$, & $a^4b(aba)^2$,    & $a^4b^2a^4b$,   & $a^3ba^3(ba)^2$\\
$a(aab)^2a^3b$, & $a^2ba^5b^2a$,   & $a^2ba^2(aab)^2$, & $aba^5(ab)^2$,  & $ba^8b^2$.
\end{tabular}
\end{center}
\vspace{0.5em}Notice that $a^4b(aba)^2$ and $a^2ba^2(aab)^2$ are the only $\prec$-maximal words. Correspondingly, the only \textit{rl}-Parikh normal forms of matrix $M$ are 
$$[\Psi_\Sigma(a)]^{4}[\Psi_\Sigma(b)][\Psi_\Sigma(aba)]^{2} \text{ and } [\Psi_\Sigma(a)]^{2}[\Psi_\Sigma(b)][\Psi_\Sigma(a)]^{2}[\Psi_\Sigma(aab)]^{2},$$
which are in fact the matrices (1) and (2) in Example~\ref{ExNormalMat}.
\end{example}

The following is the converse of Theorem~\ref{RelNormalMatWord}.

\begin{theorem}\label{RelNormalMatWord2}
Suppose $\Sigma$ is an ordered alphabet with $|\Sigma|=s$ and $M\in\mathcal{P}_\Sigma$. Assume $B_{k}^{n_{k}}B_{k-1}^{n_{k-1}}\cdots B_0^{n_0}$ is an \textit{rl}-Parikh normal form of $M$. Suppose $w\in\Sigma^*$ such that $w=v_{k}^{n_{k}}v_{k-1}^{n_{k-1}}\cdots v_0^{n_0}$ where for every integer $0\le i\le k$, we have $v_i\in\Sigma^+$ with $\Psi_\Sigma(v_i)=B_i$. Then, $\pn_r(w)=v_{k}^{n_{k}}v_{k-1}^{n_{k-1}}\cdots v_0^{n_0}$ and $w$ is $\prec$-maximal. 
\end{theorem}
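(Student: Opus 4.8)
The plan is to transport the given matrix normal form into the world of words using that $\Psi_\Sigma$ is a morphism. Throughout write $\tilde w_i=v_{k}^{n_{k}}v_{k-1}^{n_{k-1}}\cdots v_i^{n_i}$ for $0\le i\le k$ and $\tilde w_{k+1}=\lambda$, so that $\tilde w_0=w$ and $\tilde w_i=\tilde w_{i+1}v_i^{n_i}$. Since $\Psi_\Sigma(v_i)=B_i$, one gets $\Psi_\Sigma(\tilde w_i)=A_i$, where $A_i$ is exactly the matrix of Definition~\ref{DefNormalForm}; hence $(A_{i+1},B_i,n_i)\in S_{A_i}$, i.e. $n_i=\mu(A_i)$, $\sigma(B_i)=\vartheta(A_i)$ and $A_i=A_{i+1}B_i^{n_i}$. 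The single recurring tool is that every factorization of a word induces, via $\Psi_\Sigma$, a factorization of its Parikh matrix, together with $\sigma(\Psi_\Sigma(x))=|x|$ for all $x\in\Sigma^*$; combined with Remark~\ref{RemHighPowWord}, Remark~\ref{RemHighPowMat} and the invertibility of Parikh matrices, this moves inequalities back and forth between $\tau$ and $\mu$ and between $\theta$ and $\vartheta$.

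First I would prove $\pn_r(w)=v_{k}^{n_{k}}\cdots v_0^{n_0}$ by showing $\rho(\tilde w_i)=(\tilde w_{i+1},v_i,n_i)$ for every $i$ and then invoking uniqueness of $\rho$ and termination at $\tilde w_{k+1}=\lambda$. That $\tau(\tilde w_i)=n_i$ follows from a sandwich: the factorization $\tilde w_i=\tilde w_{i+1}v_i^{n_i}$ gives $\tau(\tilde w_i)\ge n_i$ by Remark~\ref{RemHighPowWord}, while any word power $\tilde w_i=uv^{n}$ with $n>n_i$ would, through $\Psi_\Sigma$ and Remark~\ref{RemHighPowMat}, force $\mu(A_i)\ge n>n_i=\mu(A_i)$. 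For $\theta(\tilde w_i)=|v_i|$ I use $|v_i|=\sigma(B_i)=\vartheta(A_i)$ and split on $n_i$. When $n_i>1$ both bounds transport directly: the block boundary gives $\theta(\tilde w_i)\ge|v_i|$, and each word factorization $\tilde w_i=uv^{n_i}$ maps to a matrix one with $\sigma(\Psi_\Sigma(v))=|v|\le\vartheta(A_i)$, giving $\theta(\tilde w_i)\le|v_i|$. When $n_i=1$ the lower bound $|v_i|\le\theta(\tilde w_i)$ is automatic, since a strictly shorter admissible suffix would, via $\Psi_\Sigma$, produce a factorization $A_i=AB$ with $\mu(A)\neq1$ and $\sigma(B)<\vartheta(A_i)$, contradicting the minimality defining $\vartheta(A_i)$ (and if no admissible suffix exists then $\theta(\tilde w_i)=|\tilde w_i|\ge|v_i|$).

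The delicate point, which I expect to be the main obstacle, is the matching upper bound $\theta(\tilde w_i)\le|v_i|$ when $n_i=1$ and $i<k$. The only suffix of $\tilde w_i$ of length $|v_i|$ is $v_i$ itself, so this bound holds precisely when the block-boundary split $\tilde w_i=\tilde w_{i+1}v_i$ is admissible, i.e. when $\tau(\tilde w_{i+1})=n_{i+1}\neq1$. Thus the crux is a matrix-theoretic structural fact mirroring the word recursion (where a trivial-exponent block is always followed by a non-trivial one): if $\mu(A_i)=1$ and $A_i$ admits a factorization $A_i=AB$ with $B\in\mathcal P_\Sigma^+$ and $\mu(A)\neq1$, then every factorization $A_i=A_{i+1}B_i$ attaining $\sigma(B_i)=\vartheta(A_i)$ must have $\mu(A_{i+1})\neq1$; and if no such factorization exists, then $A_{i+1}=I_{s+1}$, forcing $i=k$ and $v_k=\tilde w_k$. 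Establishing this requires showing that the entry inequalities defining Parikh matrices (Theorem~\ref{EntryParikh} and its higher analogues) rule out a ``non-power'' cofactor realizing the minimal second-diagonal sum; this is where the real work lies, and it is the converse-direction analogue of the structural property implicit in the word recursion. Once it is in hand, the block boundary supplies the required witness, $\theta(\tilde w_i)=|v_i|$ follows, and part one is complete.

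Finally, for $\prec$-maximality I would argue by contradiction. Suppose $w\prec w'$ with $w'\in C_w$ and $\pn_r(w')=y_{j}^{m_{j}}\cdots y_0^{m_0}$, let $N$ be the index of Definition~\ref{DefRelationWord}, and set $\tilde w_N'=y_{j}^{m_{j}}\cdots y_N^{m_N}$. Matching the first $N$ blocks and cancelling (Parikh matrices are invertible) gives $\Psi_\Sigma(\tilde w_N')=A_N$, so $m_N=\tau(\tilde w_N')\le\mu(A_N)=n_N$ immediately rules out alternative~(i). For alternative~(iii), with $n_N=m_N>1$, each word power-factorization of $\tilde w_N'$ maps to a matrix one, whence $|y_N|=\theta(\tilde w_N')\le\vartheta(A_N)=|v_N|$, contradicting $|v_N|<|y_N|$. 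For alternative~(ii), with $n_N=m_N=1$, each admissible word suffix maps to an admissible matrix factor, whence $|v_N|=\vartheta(A_N)\le\theta(\tilde w_N')=|y_N|$ (using $\vartheta(A_N)\le\sigma(A_N)$ when no admissible suffix exists), contradicting $|v_N|>|y_N|$. As all three alternatives fail, no such $w'$ exists and $w$ is $\prec$-maximal. Note that part two needs only the one-directional word-to-matrix transport and the inequality $\tau\le\mu$, so the structural obstacle above is confined to part one.
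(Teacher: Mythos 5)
The paper does not actually write out a proof of this theorem---it only asserts that the argument is analogous to that of Theorem~\ref{RelNormalMatWord}---so your proposal has to be judged on its own completeness. Most of it is the right transport argument, correctly executed: $\tau(\tilde w_i)=n_i$ via the sandwich between Remark~\ref{RemHighPowWord} and Remark~\ref{RemHighPowMat}, the case $n_i>1$ of $\theta(\tilde w_i)=|v_i|$, the termination analysis at $i=k$, and the entire $\prec$-maximality argument (which, as you observe, needs only the word-to-matrix direction of the transport together with $\tau\le\mu$) are all sound.

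However, the step you yourself flag as ``the delicate point'' is a genuine gap, and you do not close it; you only describe what would have to be proved. Concretely: when $n_i=1$ and $i<k$, the factorization $\tilde w_i=\tilde w_{i+1}v_i$ contributes to the set over which $\theta(\tilde w_i)$ is minimized only if $\tau(\tilde w_{i+1})\neq 1$, i.e.\ only if $n_{i+1}=\mu(A_{i+1})\neq 1$. On the word side this is automatic, because Definition~\ref{DefNormalFormWord} builds the condition $\tau(u)\neq 1$ into the admissible factorizations. On the matrix side it is not: $S_{A_i}$ in Definition~\ref{DefNormalForm} requires only $\sigma(B_i)=\vartheta(A_i)$ and imposes no condition on $\mu(A_{i+1})$; the minimum defining $\vartheta(A_i)$ ranges over factorizations $A_i=A\cdot B$ with $\mu(A)\neq 1$, but $S_{A_i}$ may a priori contain a \emph{different} factorization $(A_{i+1},B_i,1)$ realizing the same value $\sigma(B_i)$ yet having $\mu(A_{i+1})=1$. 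If the given normal form made such a choice, then $\theta(\tilde w_i)>|v_i|$ and $\pn_r(w)\neq v_{k}^{n_{k}}\cdots v_0^{n_0}$, so the truth of the theorem itself hinges on ruling this out. You diagnose this asymmetry accurately---it is precisely the point that the paper's ``the proof is analogous'' glosses over---but a complete proof must either establish the structural lemma (every $(A,B,1)\in S_{M}$ has $\mu(A)\neq 1$ whenever the set defining $\vartheta(M)$ is nonempty) or reroute the argument around it, and your proposal does neither. As written, part one of your proof is therefore incomplete.
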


\begin{proof}
It can be shown that $\pn_r(w)=v_{k}^{n_{k}}v_{k-1}^{n_{k-1}}\cdots v_0^{n_0}$ and $w$ is $\prec$-maximal by referring to Definition~\ref{DefNormalFormWord}, Definition~\ref{DefNormalForm}, Remark~\ref{RemChopOffMat}, Remark~\ref{RemHighPowMat} and arguing analogously to the proof of Theorem~\ref{RelNormalMatWord}. However, the explicit proof of this theorem is not presented here as it resembles that of Theorem~\ref{RelNormalMatWord}.
\end{proof}

\section{Conclusion}
We have seen that Parikh matrices are versatile in the study of subword occurrences in words which are in the form of powers. In fact, by using Theorem~\ref{PowerTheorem}, one can acquire information on the subword occurrences in arbitrary power of any word by just knowing the base word.

Definition~\ref{DefNormalForm} and Definition~\ref{DefNormalFormWord} can be modified in a way such that the decompositions commence from left to right. Accordingly, one could term the corresponding forms obtained as the $lr$-Parikh normal forms. For both Parikh matrices and words, it can then be studied to what extent the $rl$-Parikh normal forms and $lr$-Parikh normal forms are related to each other.

Last but not least, Proposition~\ref{EqualityMequivClass} is an interesting observation on the study of \mbox{$M$\!-equivalence} of powers of words, which we would further investigate in our future contribution. 
For $\Sigma=\{a<b<c\}$, we see that there exists $w\in\Sigma^*$ satisfying the equality $|C_{w^2}|=|C_w|^2$ for arbitrary $|C_w|=N$. For the case $N=1$, consider the word $w=abcb$ while for the case $N>1$, consider the word $w=a^{N-1}cb$ (notice that $|C_w|=N$). In both cases, we have $|C_{w^2}|=|C_w|^2$. Thus it is intriguing to know whether the following general result holds:
\begin{center}
\textit{Suppose $\Sigma=\{a<b<c\}$ and $w\in\Sigma^*$. For any positive integer $m$, there exists $w\in\Sigma^*$ satisfying the equality $|C_{w^m}|=|C_w|^m$ for arbitrary $|C_w|$.}
\end{center}

\section*{Acknowledgement}
The second and third authors gratefully acknowledge support for this research by a Research University Grant No.~1011/PMATHS/8011019 of Universiti Sains Malaysia. This paper is a part of the second author's Ph.D work.

\bibliographystyle{abbrv}

\end{document}